\newtheorem{thm}{Theorem}[section]
\newtheorem{introthm}{Theorem}
\newtheorem*{thm*}{Theorem} 
\newtheorem{lem}[thm]{Lemma}
\newtheorem{prop}[thm]{Proposition}
\newtheorem{cor}[thm]{Corollary}
\theoremstyle{definition}
\newtheorem{ex}[thm]{Example}}
\newtheorem{rem}[thm]{Remark}}
\newcommand{\N}{\ensuremath{\mathbb{N}}}
\newcommand{\C}{\ensuremath{\mathbb{C}}}
\newcommand{\Q}{\ensuremath{\mathbb{Q}}}
\newcommand{\R}{\ensuremath{\mathbb{R}}}
\newcommand{\GL}{\operatorname{GL}}
\newcommand{\SL}{\operatorname{SL}}
\newcommand{\Orth}{\mathrm{O}}
\newcommand{\Aut}{\operatorname{Aut}}
\newcommand{\Spec}{\operatorname{Spec}}
\newcommand{\Frac}{\ensuremath{\mathrm{Frac}}}
\newcommand{\card}{\operatorname{card}}
\newcommand{\del}{\ensuremath{\partial}}
\newcommand{\p}{\varphi}
\newcommand{\td}{\operatorname{td}}
\newcommand{\ha}[1]{{\hbox to#1pt{}}}
\newcommand{\hb}[1]{{\hbox to-#1pt{}}}
\newcommand{\hra}{\hookrightarrow}
\title{Large Fields in Differential Galois Theory}
\author{Annette Bachmayr, David Harbater, Julia Hartmann and Florian Pop}
\date{\today}
\begin{document}

\thanks{The first author was funded by the Deutsche Forschungsgemeinschaft (DFG) -
	grant MA6868/1-1 and by the Alexander von Humboldt foundation through a Feodor Lynen fellowship. The second and third author were supported on NSF collaborative FRG grant DMS-1463733 and NSF grant DMS-1805439; additional support was provided by NSF collaborative FRG grant DMS-1265290 (DH) and a Simons Fellowship (JH). The fourth author was supported by NSF collaborative FRG grant DMS-1265290.\\	
	\textit{Mathematics Subject Classification} (2010):  12H05, 12E30, 20G15 (primary); 12F12, 14H25 (secondary).\\
	\textit{Key words and phrases.} Picard-Vessiot theory, large fields, inverse differential Galois problem, embedding problems, linear algebraic groups.}

\begin{abstract}
	{We solve the inverse differential Galois problem over differential fields with a large field of constants of infinite transcendence degree over $\Q$. More generally, we show that over such a field, every split differential embedding problem can be solved. In particular, we solve the inverse differential Galois problem and all split differential embedding problems over $\Q_p(x)$.}
\end{abstract}

\maketitle

\section*{Introduction}
Large fields play a central role in field arithmetic and modern Galois theory, providing an especially fruitful context for investigating rational points and extensions of function fields of varieties. A field $k$ is called {large} if every smooth $k$-curve with a $k$-rational point has infinitely many such points (see \cite[p.~2]{Po96}). In this paper we extend a key result about the Galois theory of large fields to the context of differential Galois theory.

Differential Galois theory, the analog of Galois theory for linear differential equations, had long considered only algebraically closed fields of constants; but more recently other constant fields have been considered (e.g. see \cite{amano-masuoka}, \cite{Andre}, \cite{BHH}, \cite{CHP}, \cite{Dyc}, \cite{LSP}).  Results on the inverse differential Galois problem, asking which linear algebraic groups over the constants can arise as differential Galois groups, have all involved constant fields that happen to be large.  In this paper, we prove the following result (see Theorem~\ref{thm inverse problem}):

\begin{introthm} \label{inverse problem intro}
	If $k$ is any large field of infinite transcendence degree over $\Q$, then every linear algebraic group over $k$ is a differential Galois group over the field $k(x)$ with derivation $d/dx$.
\end{introthm}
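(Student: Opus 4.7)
The plan is to deduce this from the stronger statement, promised in the abstract, that over such $k$ every split differential embedding problem over $k(x)$ is solvable. Given a target group $G$, apply that general result to the trivial starting data consisting of the Picard-Vessiot extension $k(x)/k(x)$ (whose differential Galois group is trivial) together with the (automatically split) embedding problem $G \twoheadrightarrow \{1\}$. A solution is exactly a Picard-Vessiot extension of $k(x)$ with differential Galois group $G$, which is the assertion. Hence the real content lies in proving the split embedding problem theorem.

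To establish that theorem, first reduce to the case of a connected kernel: realize the finite \'etale quotient $G/G^{\circ}$ using classical large-field Galois theory (Pop's theorem already realizes every finite group as a Galois group over $k(x)$), and then solve a split embedding problem with connected kernel. For a connected linear algebraic group, employ the Levi decomposition $G = L \ltimes R_u(G)$ and realize the two factors separately. The reductive factor $L$ is obtained from a differential equation whose coefficients are algebraically independent over $\Q$ inside $k$; this is where the infinite transcendence degree of $k$ enters crucially, since it is what keeps the field of constants of the resulting Picard-Vessiot extension exactly equal to $k$ rather than forcing an enlargement. The unipotent radical $R_u(G)$ is built up by iterated $\Ga$-extensions, whose existence is cohomologically controlled by the patching data below.

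Finally, assemble the local constructions into a single Picard-Vessiot extension of $k(x)$ by differential algebraic patching over $\P^{1}_{k}$ in the spirit of Harbater--Hartmann. The largeness of $k$ supplies the $k$-rational points on $\P^{1}$ (and on auxiliary torsors) needed to set up the patching problem and to glue the local pieces. The main obstacle, I expect, is to execute this patching while simultaneously (i) keeping the field of constants equal to $k$ throughout, (ii) pinning down the global differential Galois group as exactly $G$ rather than some proper subgroup or extension, and (iii) faithfully realizing the semidirect-product action of $L$ on $R_u(G)$ in the glued extension. The two hypotheses on $k$---largeness for patching, infinite transcendence degree for holding the constants fixed while introducing algebraically independent parameters---are precisely calibrated for this task, and the construction should break without either one.
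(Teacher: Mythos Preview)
Your reduction of Theorem~A to Theorem~B (the split embedding problem theorem) via the trivial embedding problem $G\twoheadrightarrow\{1\}$ is logically valid, and the paper's two theorems are indeed proved by the same mechanism (though the paper treats the inverse problem separately in Section~4 rather than deducing it as a corollary of Section~5). The substantive issue is your sketch of how to prove the embedding problem theorem itself.

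The gap is the role you assign to largeness. Harbater--Hartmann patching does not work over an arbitrary large field: it requires a \emph{complete} discretely valued base. Largeness alone does not furnish the analytic/formal local pieces or the factorization results needed to glue. Your plan to ``assemble the local constructions \dots by differential algebraic patching over $\P^1_k$'' with ``largeness of $k$ supplying the $k$-rational points'' conflates two distinct ingredients, and the patching step would simply not go through over a general large $k$. Relatedly, the idea of realizing the reductive Levi factor by choosing ``algebraically independent coefficients'' inside $k$ is too vague to carry weight; there is no general mechanism by which generic parameters alone pin down a prescribed reductive differential Galois group over a non-algebraically-closed constant field.

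The paper's route is essentially orthogonal to yours. One first descends the group (and, for Theorem~B, the entire embedding problem) to a finitely generated subfield $k_0\subseteq k$. Over the Laurent series field $K=k_0((t))$, which \emph{is} complete, the inverse problem and the split embedding problem are already solved by earlier patching work (\cite{BHH}, \cite{BHH:embed}); these results are invoked as black boxes. The new step (Propositions~4.1 and~5.2) is a finiteness argument: the Picard-Vessiot ring produced over $K(x)$, together with all the torsor data, is defined over $k_1(x)$ for some finitely generated $k_1$ with $k_0\subseteq k_1\subseteq K$. Only now does largeness enter, via the characterization ``$k$ is existentially closed in $k((t))$'': Corollary~1.4 produces a $k_0$-embedding $k_1\hookrightarrow k$, and infinite transcendence degree is what guarantees $\td(k/k_0)\ge\td(k_1/k_0)$ so that this embedding exists. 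One then base-changes the Picard-Vessiot ring from $k_1(x)$ to $k(x)$. So the division of labor is: completeness (of $k_0((t))$) for patching; largeness plus infinite transcendence degree for the embedding $k_1\hookrightarrow k$; not the reverse.
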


As a consequence, we solve the inverse differential Galois problem over $\Q_p(x)$; this had previously been open.  (See also Corollary~\ref{cor kovacic}.)

In differential Galois theory (as in usual Galois theory), authors have considered embedding problems, which ask whether an extension with a Galois group $H$ can be embedded into one with group $G$, where $H$ is a quotient of $G$.  
(For example, see \cite{MatzatPut}, \cite{Hartmann}, \cite{Oberlies}, \cite{Ernst}, \cite{BHHW}, \cite{BHH:embed}.)
In order to guarantee solutions, it is generally necessary to assume that the extension is split (i.e., $G \to H$ has a section).  In this paper we prove the following result about split embedding problems over large fields (see Theorem~\ref{main}):

\begin{introthm} \label{main thm intro}
	If $k$ is a large field of infinite transcendence degree over $\Q$, then every split differential embedding problem over $k(x)$ with derivation $d/dx$ has a proper solution.
\end{introthm}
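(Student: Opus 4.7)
The plan is to reduce the split embedding problem to two ingredients --- realizing the kernel $N$ via Theorem~A, and twisting the $N$-extension to encode the $H$-action furnished by the splitting --- exploiting both the split structure of $G$ and the fact that $k$ has infinite transcendence degree over $\Q$.

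\emph{Setup.} A split differential embedding problem provides an exact sequence $1 \to N \to G \to H \to 1$ with a section $s\colon H \to G$, so $G \cong N \rtimes_s H$ and $N$ acquires an $H$-action; together with this comes a PV extension $E/F$ having $\Galf(E/F) = H$, where $F = k(x)$. We seek a PV extension $\tilde E/F$ with $E \subset \tilde E$, $\Galf(\tilde E/F) = G$, inducing the prescribed quotient $G \to H$. Since $\mathrm{trdeg}_\Q(k) = \infty$, pick a subfield $k_0 \subset k$ over which $E/F$ and the embedding problem are defined, with $k$ still of infinite transcendence degree over $k_0$; this leaves a reservoir of transcendentals for the next step.

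\emph{Realize $N$ independently.} By Theorem~A, there exists a PV extension $E_N/F$ with differential Galois group $N$. By choosing the defining parameters for $E_N$ inside the transcendentals of $k$ over $k_0$, we arrange that $E_N$ is generic with respect to $E$: concretely, that $E$ and $E_N$ are linearly disjoint over $F$, so that $E \cdot E_N$ is a PV extension of $F$ with Galois group the direct product $N \times H$.

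\emph{Twist to the semidirect product.} To pass from $N \times H$ to $G = N \rtimes_s H$, twist the system realizing $N$ by the $H$-torsor associated to $E/F$, using the $H$-action on $N$ furnished by $s$. Concretely, a matrix equation $\partial Y = A Y$ whose PV group is $N$ is replaced by a twisted version, with coefficients in $E$ transforming $H$-equivariantly, whose PV extension $\tilde E/F$ then carries the semidirect product Galois structure and restricts to $E$ over the $H$-quotient. This is where the large field hypothesis is used most crucially: for instance, via the patching framework of \cite{BHH:embed}, local $H$- and $N$-data at two distinct rational points of $\P^1_k$ can be assembled into a global PV extension with group $G$, the largeness providing enough rational points and the right local-global principle.

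\emph{Main obstacle.} The core difficulty is verifying that the twisted/patched extension is indeed Picard--Vessiot over $F$ with Galois group \emph{exactly} $G$ (not a proper subgroup), and that the induced surjection onto $\Galf(E/F)$ coincides with the given $G \to H$. This rests on two inputs: the linear disjointness of $E$ and $E_N$, which consumes the extra transcendentals of $k$ over $k_0$, and a careful construction expressing the $H$-action on $N$ at the level of differential modules. The infinite transcendence degree of $k$ is thus used twice --- once to invoke Theorem~A to produce $E_N$, and again to keep $E_N$ independent of the given $E$ so that the twist lands in the intended semidirect product.
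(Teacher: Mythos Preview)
Your approach diverges from the paper's, and the central step has a genuine gap.

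The paper does not build the solution by realizing $N$ via Theorem~A and then twisting. Instead it uses a descent-and-transfer strategy: descend the embedding problem to $(N_0\rtimes H_0, S_0)$ over $k_0(x)$ for a finitely generated $k_0\subseteq k$; pass to $K=k_0((t))$, where the induced embedding problem has a proper solution $\hat R$ by \cite{BHH:embed}; descend $\hat R$ to a proper solution $R_1$ over $k_1(x)$ for some finitely generated $k_1/k_0$ with $k_1\subseteq K$ (Proposition~\ref{prop descent EBP}); then use largeness together with $\td(k/k_0)=\infty$ to obtain a $k_0$-embedding $k_1\hookrightarrow k$ (Corollary~\ref{cor large}), and base-change $R_1$ to $k(x)$. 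Largeness enters \emph{only} through existential closedness of $k$ in $k((t))$, not through any patching over $\P^1_k$.

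The gap in your argument is the ``twist'' step. Passing from a compositum with differential Galois group $N\times H$ to an extension with group $N\rtimes_s H$ is precisely the content of the embedding problem; it cannot be effected by a formal modification of the matrix equation for $N$, and you supply no actual construction. What one needs is an $N$-Picard--Vessiot ring over $E$ (not over $F$) equipped with a compatible $H$-action whose $N$-invariants recover $S$---and producing such an object is exactly what \cite{BHH:embed} accomplishes by patching, but that framework is set up over complete fields like $k_0((t))$, not over an arbitrary large $k$. Your direct appeal to it over $\P^1_k$ is therefore unjustified. Similarly, the claim that one can ``choose the defining parameters for $E_N$ inside the transcendentals of $k$ over $k_0$'' to force linear disjointness is not substantiated: Theorem~A produces \emph{some} Picard--Vessiot ring with group $N$, with no control over where its data lie relative to $k_0$, and no mechanism is offered to vary those parameters.
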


In fact, our proof shows somewhat more.  Given a field $k_0$ of characteristic zero, and a linear algebraic group $G$ over $k_0$, there exists an integer $n$ such that for any large overfield $k/k_0$ of transcendence degree at least $n$, there is a Picard-Vessiot ring over $k(x)$ with differential Galois group $G_k$ (see Theorem \ref{thm inverse problem}(a)).  A similar assertion holds in the situation of Theorem~\ref{main thm intro}; see Theorem~\ref{main}(a). 

Theorems A and B above carry over Main Theorem~A of \cite{Po96} from usual Galois theory to differential Galois theory.  That result, which was the culmination of much work on inverse Galois theory for function fields over various types of base fields, proved that every finite group is a Galois group over $k(x)$, and that  finite split embedding problems are solvable over $k(x)$, if $k$ is large.  That result 
made clear that inverse Galois theory over function fields is 
best studied in the context of large fields, which include in particular $\R$, $\Q_p$, $k((t))$, $k((s,t))$, algebraically closed fields, and pseudo-algebraically closed fields.  We refer the reader to \cite{littlesurvey} for a further discussion.

Theorem~\ref{inverse problem intro} also generalizes a number of known results on the differential inverse Galois problem (e.g., in the cases of $k$ being algebraically closed, or real, or a field of Laurent series in one variable), as well as yielding other results (e.g., the cases of PAC fields, Laurent series in more than one variable, and the $p$-adics).  Moreover, in this paper we generalize the theorem further from $k(x)$ to all differential fields with field of constants $k$ that are finitely generated over $k$ (Corollary~\ref{cor kovacic}).

A special case of Theorems~\ref{inverse problem intro} and~\ref{main thm intro}
was proven by the first three authors in \cite{BHH}, where $k$ was required to be a Laurent series field $k_0((t))$.  The restriction there to that case had resulted from the use of patching methods in that paper.  In the current paper, we bring in other ideas to build on the results of \cite{BHH} and of two sequels (\cite{BHHW} and \cite{BHH:embed}), in order to obtain our theorems about function fields over large fields.  In \cite[Theorem~4.2]{BHH:embed}, it was shown that proper solutions exist to every split differential embedding problem over $k_0((t))(x)$ that is induced from a split embedding problem over $k_0(x)$.  Since Laurent series fields are large, the main result in this current paper also yields a new result over Laurent series fields, namely that in \cite{BHH:embed} the hypothesis on the embedding problem being induced from $k_0(x)$ can be dropped.

As in the case of embedding problems over large fields in usual Galois theory, it is necessary in our main result to assume that the embedding problem is split.  In usual Galois theory, this is because in order for all finite embedding problems over $k(x)$ to have proper solutions, it is necessary by \cite[I.3.4, Proposition~16]{Serre:GalCoh} for $k(x)$ to have cohomological dimension at most one; and hence for $k$ to be separably closed (not merely large).  In differential Galois theory, every finite regular Galois extension of $k(x)$ is a Picard-Vessiot ring for a finite constant group, and so the same reason applies.  On the other hand, in usual Galois theory, every finite embedding problem over $k(x)$ (even if not split) has a proper solution if $k$ is algebraically closed, and in fact has many such solutions in a precise sense; this implies that the absolute Galois group of $k(x)$ is free of rank $\card(k)$ (see~\cite{Po95} and~\cite{Ha95}).  In the differential situation, it was shown in \cite[Theorem~3.7]{BHHW} that all differential embedding problems over $\C(x)$ have proper solutions.  The main theorem of the current paper combined with Proposition~3.6 of \cite{BHHW} implies that for any algebraically closed field $k$ of infinite transcendence degree over $\Q$, every differential embedding problem over $k(x)$ has a proper solution (Corollary~\ref{cor ebp}).

Unlike the analogous results in usual Galois theory, our Theorems~\ref{inverse
problem intro} and~\ref{main thm intro} assume infinite transcendence degree.  This
extra hypothesis results from specialization in differential Galois theory behaving
differently than in usual Galois theory.  In both situations, an extension of 
$k((t))(x)$ with a given group $G$ descends to an extension of $l(x)$ with group
$G$, for some finitely generated field extension $l/k$ contained in $k((t))$.  The
field $l$ is the function field of a $k$-variety $V$, over which the Galois
extension is defined.  In usual Galois theory, the Bertini-Noether theorem (e.g.,
\cite[Proposition~9.4.3]{FriedJarden}) yields a dense open subset $U \subseteq V$ such that the specialization of the
Galois extension to any $k$-point of $U$ is again a $G$-Galois field extension; and
this yields the desired result for $k$ large.  But in differential Galois theory,
the natural analog of Bertini-Noether fails, and the situation is much more
complicated (see \cite[Section~5]{Hrush}).  In order to complete the strategy in our situation, we first use that 
the given group descends to a finitely generated field 
extension $k_0/\Q$. 
Then the extension of $k((t))(x)$ with differential Galois group $G$ descends to an extension of $k_1(x)$ with differential Galois group $G$ for a suitable finitely generated field extension $k_1\subseteq k_0((t))$ of $k_0$. If the transcendence degree 
of $k/k_0$ is greater or equal the transcendence degree of $k_1/k_0$, then we
can embed $k_1$ into $k$ (Cor. \ref{cor large}) and achieve Theorem~\ref{inverse problem intro} by base change from $k_1$ to $k$. (This can be viewed as descending to a $k_0$-variety and then 
specializing to a $k$-point that lies over the generic point of that 
variety.) As the group varies, so do $k_0$ and $k_1$; so to obtain 
Theorem~\ref{inverse problem intro} for {\em all} $G$ we require $k$ to 
have {\em infinite} transcendence degree over $\Q$.

This manuscript is organized as follows. Section~\ref{large fields} concerns embeddings of function fields into large fields. More specifically, Proposition~\ref{prop embedding into large}, originally 
proven by Arno Fehm, states that the function field of a smooth connected variety over a subfield of a large field can be embedded into that large field under certain hypotheses. This proposition and its corollary are key to deducing our results over large fields from the case of Laurent series fields, in 
Sections~\ref{inverse} and~\ref{embedding}. Section~\ref{PV} concerns 
Picard-Vessiot theory over arbitrary constant fields of characteristic zero. In particular, it is proven here that the property of being Picard-Vessiot is preserved under base change. In Proposition~\ref{prop descent PVR} and~\ref{prop descent EBP}, respectively, this is used to descend
a Picard-Vessiot ring over a function field to one over a smaller ground field (viz.\ a rational function field over a finitely generated subfield of the original field of constants).  
We use these descent results to solve the differential inverse problem and differential embedding problems in Sections~\ref{inverse} and~\ref{embedding}, in the context of large fields. 

\smallskip

We thank William Simmons and Henry Towsner for helpful discussions.

\section{Embeddings into large fields}\label{large fields}

The aim of this section is to prove that certain subfields of the Laurent series field $k((t))$ can be embedded into $k$ if $k$ is a large field, which will become important in Section~\ref{inverse} and Section~\ref{embedding}.  Recall that a field $k$ is {\em large} if every smooth $k$-curve with a $k$-rational point has infinitely many such points.  Examples include algebraically closed fields, fields that are complete with respect to a nontrivial absolute value (see e.g.~\cite[Section 1, Ex.\ A.2]{littlesurvey}), and fraction fields of domains that are Henselian with respect to a non-trivial ideal (see \cite{H=>L}, Theorem~1.1).  In particular, the fields
${\mathbb C}$,  ${\mathbb R}$, ${\mathbb Q}_p$, and the fraction field $k_0((t_1,\dots,t_n))$ of a power series ring in several variables are all large.

If $k$ is large, then every smooth $k$-curve with a rational point has $\operatorname{card}(k)$ rational points (\cite[Thm.\ 3.1.1]{littlesurvey}).  Also, if $k$ is large and $X$ is a smooth irreducible $k$-variety with a rational point, then 
$X(k)$ is dense in $X$ (\cite[Prop.\ 2.6]{littlesurvey}).  Moreover a field $k$ is large if and only if it is existentially closed in its Laurent series field $k((t))$ (\cite[Prop.\ 2.4]{littlesurvey}).  Hence if $k$ is large and $X$ is a smooth $k$-variety with a $k((t))$-point, then $X$ has a $k$-point. 

The following result was proven in \cite{Fehm}; see Theorem~1 and Lemma~4 there.  Below we give a shorter and more direct proof, using a different strategy.  (Here and below we write $\td(k/l)$ for the transcendence degree of a field extension $k/l$.)

\begin{prop}\label{prop embedding into large}
 Let $k$ be a large field, $l\subseteq k$ be a subfield,
 and $V$ be a smooth connected $l$-variety with 
 function field $L=l(V)$ and $V(k)$ non-empty.  Suppose that $\td(k/l)\geqslant\dim(V)$. 
 Then the canonical embedding of fields
 $l\hra k$ can be prolonged to an embedding of fields
 $L\hra k$. Equivalently, there exist $k$-rational points 
 dominating the generic point of $V$.
\end{prop}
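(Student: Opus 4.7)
The plan is to prove the proposition by induction on $n = \dim V$. The base case $n = 0$ is immediate, since then $V = \Spec L$ with $L/l$ finite, so any $k$-point supplies the embedding $L \hookrightarrow k$. For $n \ge 1$ I first perform standard reductions: replace $l$ with its algebraic closure in $L$ (which embeds canonically into $k$ via any chosen $P_0 \in V(k)$) so that $V$ becomes geometrically integral, and shrink $V$ to an affine open on which a chosen element $t \in L \setminus l$ defines a smooth dominant morphism $\pi\colon V \to \mathbb{A}^1_l$; the density consequence of largeness (Proposition~\ref{characterization of large}(b)) applied to $V\times_l k$ ensures that $V(k)$ remains non-empty after any such shrinking.

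The inductive step then reduces to producing an element $a \in k$ transcendental over $l$ with $\pi^{-1}(a)(k) \neq \emptyset$. Granted such an $a$, the fiber $V_a$ is a smooth geometrically integral $l(a)$-variety of dimension $n-1$, its function field identifies with $L$ viewed as an extension of $l(a) \cong l(t)$, and $\td(k/l(a)) \ge n-1$. Applying the inductive hypothesis to $V_a$ over $l(a) \subseteq k$ then produces the desired embedding $L \hookrightarrow k$ extending $l(a) \hookrightarrow k$, hence $l \hookrightarrow k$.

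To produce such an $a$: pick $P_0 \in V(k)$ and use smoothness of $\pi$ at $P_0$ to choose regular parameters $s_1, \ldots, s_n$ with $s_1 = t - \pi(P_0)$, giving $\widehat{\mathcal{O}}_{V_k, P_0} \cong k[[s_1, \ldots, s_n]]$. The generic point of this formal neighborhood yields a $K$-point of $V$ for $K := k((s_1))\cdots((s_n))$ whose $\pi$-image $\pi(P_0) + s_1 \in K$ is transcendental over $l$. Iterating Proposition~\ref{characterization of large}(c) shows that $k$ is existentially closed in $K$, so for any finite list $g_1, \ldots, g_N \in l[t] \setminus \{0\}$, the existential assertion that $V$ has a $k$-point with $\pi$-image avoiding the zero loci of the $g_i$ holds in $K$ and hence in $k$.

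The main technical obstacle, which I expect to be the heart of the argument, is converting these finite-condition witnesses into a single $k$-point whose $\pi$-image simultaneously avoids every non-zero polynomial in $l[t]$—an infinite conjunction not directly delivered by existential closedness. Overcoming this will require using the hypothesis $\td(k/l) \ge n$ in an essential way, either through a geometric enlargement (adjoining additional independent transcendentals in $k$ so that the desired infinite condition becomes encodable as a finite existential one over an auxiliary higher-dimensional variety) or through a careful amalgamation/approximation argument that splices the finitely-conditioned witnesses into a coherent embedding inside $k$.
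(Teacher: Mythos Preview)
Your inductive skeleton is reasonable, but the proposal has a genuine gap at exactly the point you flag as ``the main technical obstacle'': producing a $k$-point $P$ of $V$ with $\pi(P)$ transcendental over $l$. Existential closedness of $k$ in $K=k((s_1))\cdots((s_n))$ only yields, for each \emph{finite} list $g_1,\dots,g_N\in l[t]\setminus\{0\}$, a $k$-point whose $\pi$-image avoids the $g_i$; it does not produce a single point avoiding all of them, and neither of the two strategies you sketch (``geometric enlargement'' or ``amalgamation'') is carried out. As stated, the proposal is a plan with its central step missing, not a proof.

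The paper does not close this gap by induction along a fibration. Instead it runs a maximality argument: among all $k$-points $(x,\imath)$ of an affine open of $V$, choose one with $d_x:=\td(\kappa(x)/l)$ maximal, and derive a contradiction from $d_x<\dim V$. The crucial input is an external result (\cite[Theorem~3.1,~2)]{littlesurvey}): if $k$ is large, $l_1\subseteq k$ is a subfield whose relative algebraic closure $\tilde l_1$ in $k$ is strictly smaller than $k$, and $W$ is a smooth $l_1$-variety with $W(k)\neq\varnothing$, then $W$ has a $k$-point that is \emph{not} an $\tilde l_1$-point. Applying this to the base-change $V_1$ of $V$ to $l_1=l({\bf u})$ (where ${\bf u}$ lifts a transcendence basis of $\kappa(x)/l$) produces a $k$-point $z$ with $d_z>d_x$. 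This cited result is exactly the ``infinite conjunction'' you are trying to manufacture from scratch; once it is available, your inductive scheme can also be made to work (after arranging the fibration so that a non-$\tilde l$-point forces a transcendental $\pi$-image), but absent it your argument is incomplete.
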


\begin{proof}
Since $V$ is smooth and connected, it is also integral.
Hence the given $k$-rational point is contained in a nonempty (dense) affine open subvariety which is smooth and integral, and we may replace $V$ by that subvariety (which we again call $V$). Let $R:=l[V]$ be its coordinate ring; then $L=\Frac(R)$. Given any $k$-point of $\Spec(R)$ (i.e., a point $x \in \Spec(R)$ together with an $l$-algebra map $\imath:\kappa(x)\hookrightarrow k$), let $d_x:=\td\big(\kappa(x)/l\big)$.
Choose $(x,\imath)$ as above such that $d_x$ is maximal; 
hence $d_x\le\dim(V)$.  It suffices to show that $d_x=\dim(V)$, since then $x$
is the generic point of $V$.  

Suppose to the contrary that $d_x<\dim(V)$. Let ${\bf u}:=(u_1,\dots,u_{d_x})$
be a system of functions in $R$ such
that its image $\tilde{\bf u}=(\tilde u_1,\dots,\tilde u_{d_x})$
under the reduction map $R\to \kappa(x)$ is a
transcendence basis of $\kappa(x)$ over $l$.
The composition $l[{\bf u}]\to R\to R/I_x = \kappa(x)$ is injective, hence
$l[{\bf u}]\cap I_x=\{0\}$, where $I_x \triangleleft R$ is the prime ideal defining $x$. 
Let
$l_1=l({\bf u})=\Frac(l[{\bf u}])$ and
$R_1:=R\otimes_{l[{\bf u}]}l_1$.
The $l$-embedding $R\hra R_1$ defines a dominant 
morphism of schemes $V_1:=\Spec R_1\hookrightarrow\Spec R=V\!$,
with $V_1$ a smooth $l_1$-variety. 
Since $\kappa(x)$ is an algebraic field extension of $l_1$,
$x\in V$ is the image of a closed point of $V_1$.
Hence $\imath:\kappa(x)\to k$
defines a $k$-point $x_1\in V_1(k)$.
Let $\tilde l$ be the algebraic closure of $l_1$ in $k$. Since
$\td(l_1/l)<\td(L/l)=\dim(V)\leqslant\td(k/l)$, it follows
that $\tilde l$ is strictly contained in $k$.
Hence by~Theorem~3.1,~2) from \cite{littlesurvey},
$V_1$ has a $k$-point that is not an $\tilde l_1$-point.
The associated point $z \in V_1 = \Spec(R_1)$ is equipped with an $l_1$-embedding 
$\imath:\kappa(z)\hra k$ whose image is thus not algebraic over $l_1$.
Viewing $z$ as a point of $V$ via
$V_1\hookrightarrow V$, we obtain a contradiction to maximality because
\[
d_z=\td\big(\kappa(z)/l\big)=\td\big(\kappa(z)/l_1\big)
+\td(l_1/l)>\td(l_1/l)=d_x.\qedhere
\]
\end{proof}

This proposition yields the following corollary, which we use in proving Theorem~\ref{thm inverse problem}.

\begin{cor}\label{cor large}
Let $k$ be a large field, $k_0\subseteq k$ and $k_1\subseteq k_0((t))$ be subfields
with $k_0\subseteq k_1$, $\td(k_1/k_0)\leqslant \td(k/k_0)$ and $k_1/k_0$ finitely generated. Then there exists a $k_0$-embedding $k_1\hookrightarrow k$.
\vskip2pt
In particular, if $k_0\subseteq k$ are fields such that $k$ is large and $\td(k/k_0)$ is infinite, then for every finitely generated field extension $k_1/k_0$ with $k_1\subseteq k_0((t))$ there is a $k_0$-embedding $k_1\hookrightarrow k$.
\end{cor}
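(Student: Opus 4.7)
The plan is to apply Proposition~\ref{prop embedding into large} to a suitable smooth connected $k_0$-model of $k_1/k_0$, after using the large-ness of $k$ to upgrade the given embedding into $k_0((t))$ to a $k$-rational point on that model. The second (``in particular'') assertion is an immediate consequence of the first, because if $\td(k/k_0)$ is infinite then the hypothesis $\td(k_1/k_0)\leqslant\td(k/k_0)$ is automatic whenever $k_1/k_0$ is finitely generated.

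For the first assertion, I would first choose a smooth connected $k_0$-variety $V$ with $k_0(V)=k_1$. Such a model exists because $k_1/k_0$ is finitely generated and, the characteristic being zero, automatically separable: take any integral affine $k_0$-model of $k_1$ and restrict to its smooth locus, which is a dense open subscheme by generic smoothness. In particular $\dim(V)=\td(k_1/k_0)$.

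Next, I would convert the inclusion $k_1\subseteq k_0((t))$ into a $k$-rational point on $V$. That inclusion corresponds to a $k_0$-morphism $\Spec k_0((t))\to V$ dominating the generic point. Composing with $\Spec k((t))\to\Spec k_0((t))$, coming from the inclusion $k_0((t))\hookrightarrow k((t))$ induced by $k_0\hookrightarrow k$, produces a $k((t))$-point of the smooth $k$-variety $V_k:=V\times_{k_0}k$. Since $k$ is large, the remark following Proposition~\ref{characterization of large} then gives a $k$-point on $V_k$, equivalently an element of $V(k)$ over $k_0$.

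With this in hand, Proposition~\ref{prop embedding into large} applies directly to $V$: it is a smooth connected $k_0$-variety with function field $k_1$ and non-empty set of $k$-points, and $\td(k/k_0)\geqslant\dim(V)$ holds by hypothesis. The proposition therefore supplies a $k_0$-embedding $k_1=k_0(V)\hookrightarrow k$. The only place that requires care is the initial choice of smooth model, which is routine in characteristic zero; after that, the argument is essentially a mechanical combination of Proposition~\ref{prop embedding into large} with the existential closedness of $k$ in $k((t))$.
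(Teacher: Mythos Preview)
Your proof is correct and follows essentially the same route as the paper's: pick a smooth $k_0$-model $V$ of $k_1$, turn the inclusion $k_1\subseteq k_0((t))\subseteq k((t))$ into a $k((t))$-point, use existential closedness of $k$ in $k((t))$ to get $V(k)\neq\varnothing$, and then apply Proposition~\ref{prop embedding into large}.

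One small remark: the corollary as stated in the paper does not assume characteristic zero, and the paper's proof avoids that assumption by observing that $k_0((t))/k_0$ is a \emph{regular} extension (separable with $k_0$ relatively algebraically closed), hence so is the subextension $k_1/k_0$, which yields a geometrically integral smooth model. You instead invoke characteristic zero to obtain separability and a smooth model via generic smoothness. Since every application of the corollary in this paper is in characteristic zero, your shortcut is harmless here, but it is slightly less general than the paper's argument.
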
 

\begin{proof}
Let $k_1$ be as in the statement of the corollary. Since $K_0:=k_0((t))$ is separably generated over $k_0$ and $k_0$ is relatively algebraically closed in $K_0$ (that is, $K_0/k_0$ is a regular field extension), it follows that $k_1$ is separably generated over $k_0$ and $k_0$ is relatively
algebraically closed in $k_1$ as well. Equivalently, there exists a geometrically integral smooth $k_0$-variety $V$ with $k_0(V)=k_1$ and $\dim(V)=\td(k_1/k_0)$. For such a $V$, $V(k_1)$ is non-empty (because it contains the generic point of $V$) and thus $V(K_0)$ is non-empty as well since $K_0\supseteq k_1$. Therefore, so is $V(K)$, where $K:=k((t))\supseteq k_0((t))=K_0$. Since $k$ is large, it is existentially closed in $K=k((t))$ (as noted earlier); and so $V(k)$ is also 
non-empty. An application of Proposition~\ref{prop embedding into large} yields a $k_0$-embedding $k_1\hookrightarrow k$ (with $l$ of loc.cit.\ replaced by $k_0$).
\end{proof}

\section{Picard-Vessiot theory}\label{PV}

Our main results concern differential Galois theory over a field of constants that is large but not necessarily algebraically closed.  Whereas classical Picard-Vessiot theory (as in \cite{vdPutSinger}) assumes an algebraically closed field of constants, we need to use a more general form of the theory; e.g., see \cite{Dyc} and \cite{BHH}.  
In Proposition~\ref{prop base change of PVR}, we prove that being a Picard-Vessiot ring is preserved under extension of constants;
this is used in Sections~\ref{inverse} and~\ref{embedding}.

Let $C_R$ denote the ring of constants of a differential ring $R$.
For a differential field $F$ of characteristic zero, $K=C_F$ is a field that is relatively algebraically closed in $F$. Consider a matrix $A\in F^{n\times n}$ and the corresponding linear differential equation $\del(y)=Ay$. A \textit{fundamental solution matrix} for this equation is a matrix $Y\in \GL_n(R)$ with entries in some differential ring extension $R/F$ such that $\del(Y)=A\cdot Y$; i.e., the columns of the matrix $Y$ form a fundamental set of solutions. A \textit{Picard-Vessiot ring} for $\del(y)=Ay$ is a simple differential ring extension $R/F$ with $C_R=K$ such that $R$ is generated by the entries of a fundamental solution matrix $Y\in \GL_n(R)$ together with $\det(Y)^{-1}$.
For short, we write $R=F[Y,\det(Y)^{-1}]$.  

The \textit{differential Galois group} of a Picard-Vessiot ring $R/F$ is the functor 
\vspace{-.1cm}
\[G: (K{\rm-algebras}) \to ({\rm Groups}), \ \  G(S):=\Aut^\del(R\otimes_K S/F\otimes_KS),
\vspace{-.1cm}\]
where $\Aut^\del(R\otimes_K S/F\otimes_KS)$ indicates the $F\otimes_K S$-linear differential automorphisms of $R\otimes_K S$ , and where
the $K$-algebra $S$ is given the trivial derivation. 
The functor $G$ is represented by the $K$-Hopf algebra $C_{R\otimes_F R}=K[(Y^{-1}\otimes Y), \det(Y^{-1}\otimes Y)^{-1} ]$, where $Y^{-1}\otimes Y := (Y^{-1}\otimes 1)\cdot (1\otimes Y)$. Hence $G$ is an affine group scheme of finite type over $K$, and thus a linear algebraic group over $K$ (since $\operatorname{char}(K)=0$). 

\begin{rem}
If $K$ is algebraically closed, then $G$ is determined by its group of $K$-points $G(K)=\Aut^\del(R/F)$, which is the classical differential Galois group over such a $K$. 
Moreover, in that situation there is a unique Picard-Vessiot ring up to isomorphism for every matrix $A\in F^{n\times n}$ (\cite[Prop.~1.18]{vdPutSinger}). This is not the case for general fields of constants; both existence and uniqueness can fail.  But over a rational function field, Picard-Vessiot rings do always exist, which is easy to see using a power series expansion of the solution at an ordinary point of the differential equation (i.e., where it is not singular) which is defined over $K$.  More generally, the same holds for the function field of any $K$-curve with an ordinary $K$-point, whether or not $K$ is finitely generated.

The following fact is worth noting but will not be used in this paper: If a Picard-Vessiot ring does exist for a given differential equation, the set of isomorphism classes of Picard-Vessiot rings for that equation is in bijection with $H^1(K,G)$. Here $G$ is the differential Galois group of (any) one of the Picard-Vessiot rings for the equation (choosing a different Picard-Vessiot ring gives an inner form of $G$ and thus does not change the Galois cohomology set). For a proof, see \cite[Cor.\,3.2]{Dyc} or \cite[Prop.\,1]{CHP}.
\end{rem}

By differential simplicity, every Picard-Vessiot ring $R/F$ is an integral domain such that
$C_{\Frac(R)}=C_R=C_F$. More generally:

\vspace{-.1cm}
\begin{lem}\label{lem simple}
	Let $R$ be a simple differential ring containing $\Q$. Then $R$ is an integral domain such that $C_R$ is a field, and the constant field of $\Frac(R)$ is equal to $C_R$. 
\end{lem}

\begin{proof}
As in \cite[Lemma 1.17.1]{vdPutSinger}, every zero divisor of $R$ is nilpotent and the radical ideal is a differential ideal (see also \cite[Lemma 2.2]{Dyc}). Hence $R$ is an integral domain. If $x\in \Frac(R)$ is constant, then $I=\{a \in R \mid ax \in R\}$ is a non-zero differential ideal in $R$ and thus $1\in I$ and $x\in R$. Hence $C_{\Frac(R)}=C_R$ and in particular, $C_R$ is a field.
\end{proof}

\begin{prop}\label{prop base change of PVR}
Let $F$ be a differential field of characteristic zero with field of constants $K$ and let $R/F$ be a Picard-Vessiot ring with differential Galois group $G$. Let $K'/K$ be a field extension and define $F'=\Frac(F\otimes_K K')$ and $R'=R\otimes_F F'$. Then $F'$ is a differential field extension of $F$ with $C_{F'}=K'$ and $R'$ is a Picard-Vessiot ring over $F'$ with  Galois group $G_{K'} := G \times_K K'$. 
\end{prop}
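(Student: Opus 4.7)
The plan is to build everything concretely out of $R \otimes_K K'$ (which already sits naturally in the picture), and then identify $R'$ as a subring of its fraction field so that Proposition~\ref{prop criterion PV} applies.

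First I would check that $F'$ is well-defined and has constants $K'$. Because $K=C_F$ is relatively algebraically closed in $F$, the extension $F/K$ is regular, so $F\otimes_K K'$ is a domain and its fraction field $F'$ makes sense. The derivation on $F$ extends uniquely to $F\otimes_K K'$ via $\partial(f\otimes c)=\partial(f)\otimes c$ (where $K'$ is given the trivial derivation), and then to $F'$. To compute the constants, pick a $K$-basis $(e_i)$ of $K'$ and write any element of $F\otimes_K K'$ uniquely as $\sum f_i\otimes e_i$; it is annihilated by $\partial$ iff every $f_i$ lies in $K$, so $C_{F\otimes_K K'}=K\otimes_K K'=K'$. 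Lemma~\ref{lem ideal correspondence} applied to the (trivially) simple differential ring $F$ and the $K$-algebra $K'$ shows that $F\otimes_K K'$ is differentially simple, hence by Lemma~\ref{lem simple} we have $C_{F'}=C_{F\otimes_K K'}=K'$.

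Next I would show $R'$ is a Picard-Vessiot ring over $F'$. Apply Lemma~\ref{lem ideal correspondence} again, this time to the Picard-Vessiot ring $R$ and the $K$-algebra $K'$: the ring $R\otimes_K K'$ is differentially simple, hence a domain by Lemma~\ref{lem simple}. The same basis argument as above shows that $C_{R\otimes_K K'}=C_R\otimes_K K'=K'$, and by Lemma~\ref{lem simple} the fraction field $L:=\Frac(R\otimes_K K')$ again has $C_L=K'$. Now $F\otimes_K K'\hookrightarrow R\otimes_K K'$ so $F'\hookrightarrow L$, and $R'=R\otimes_F F'=F'[Y,\det(Y)^{-1}]$ sits naturally inside $L$, where $Y\in\GL_n(R)$ is a fundamental solution matrix for $\partial y=Ay$. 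Since $Y\in\GL_n(L)$ satisfies $\partial Y=AY$ and $C_L=C_{F'}$, Proposition~\ref{prop criterion PV} identifies $R'$ as a Picard-Vessiot ring over $F'$ for this equation.

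Finally I would identify the differential Galois group. By definition, the Hopf algebra of $\Autd(R'/F')$ is $C_{R'\otimes_{F'}R'}$. Writing $Y_1=Y\otimes 1$ and $Y_2=1\otimes Y$ in $R'\otimes_{F'}R'$, the same explicit calculation recalled in Section~\ref{PV} shows that $C_{R'\otimes_{F'}R'}=K'\bigl[(Y_1^{-1}Y_2),\det(Y_1^{-1}Y_2)^{-1}\bigr]$, which is exactly $K\bigl[(Y^{-1}\otimes Y),\det(Y^{-1}\otimes Y)^{-1}\bigr]\otimes_K K'$, i.e.\ the Hopf algebra of $G$ base-changed to $K'$. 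Hence $\Autd(R'/F')\cong G_{K'}$.

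The step I expect to need the most care is the identification of the Galois group: one must verify that the constants of $R'\otimes_{F'}R'$ really do come out to the base-change of the Hopf algebra of $G$, which amounts to checking that taking constants commutes with the appropriate tensor product and localization. This reduces via the isomorphism $R'\otimes_{F'}R'\cong (R\otimes_F R)\otimes_K K'$ (localized at $F\otimes_K K'\setminus\{0\}$) to another application of the $K$-basis argument together with Lemmas~\ref{lem simple} and~\ref{lem ideal correspondence}; everything else is a bookkeeping verification.
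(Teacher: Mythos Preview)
Your argument for $C_{F'}=K'$ and for $R'$ being a Picard--Vessiot ring over $F'$ is essentially identical to the paper's: both reduce to the observation $\Frac(R')=\Frac(R\otimes_K K')$, apply Lemma~\ref{lem ideal correspondence} to see $R\otimes_K K'$ is simple, use Lemma~\ref{lem simple} for the constants of the fraction field, and finish with Proposition~\ref{prop criterion PV}.

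For the Galois group you take a genuinely different route. The paper argues functorially: for each $K'$-algebra $S$ it exhibits a natural bijection $G_{K'}(S)\to G'(S)$, the key point being that for $\gamma\in G'(S)$ the matrix $Y^{-1}\gamma(Y)$ has constant entries and hence lies in $\GL_n(S)$, forcing $\gamma$ to restrict to $R\otimes_K S$. You instead compare Hopf algebras directly, arguing that $C_{R'\otimes_{F'}R'}\cong C_{R\otimes_F R}\otimes_K K'$. This is a legitimate alternative, and once established it also gives the Hopf algebra structure (comultiplication is determined by $Y^{-1}\otimes Y$ in both cases). The functorial argument is shorter and avoids any analysis of $R\otimes_F R$; your approach is more structural and makes the base-change statement visible at the level of coordinate rings.

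One caution about your final paragraph: the reduction you sketch does not go through the lemmas you name. Lemma~\ref{lem ideal correspondence} requires a \emph{simple} differential ring, and $R\otimes_F R$ is not simple (nor even a domain in general), so you cannot invoke it here, and Lemma~\ref{lem simple} controls constants only under passage to the full fraction field of a simple ring, not under an arbitrary localization. What actually makes your computation work is the torsor isomorphism $R\otimes_F R\cong R\otimes_K K[G]$: base-changing along $F\hookrightarrow F'$ gives $R'\otimes_{F'}R'\cong R'\otimes_{K'}(K[G]\otimes_K K')$, and then your $K'$-basis argument (using $C_{R'}=K'$) reads off the constants as $K[G]\otimes_K K'$. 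Alternatively, you can check directly that the natural map $K[G]\otimes_K K'\to C_{R'\otimes_{F'}R'}$ is injective (the localization at $(F\otimes_K K')\setminus\{0\}$ is injective because these elements are non-zero-divisors in $(R\otimes_K K')\otimes_{K'}K'[G_{K'}]$) and surjective (the general theory says the constants are $K'$-generated by the entries of $Y^{-1}\otimes Y$). Either way, replace the appeal to Lemmas~\ref{lem simple} and~\ref{lem ideal correspondence} in that last step.
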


\begin{proof}
The derivation on $F$ extends canonically to the integral domain $F\otimes_K K'$ and hence to $F'$, by considering elements in $K'$ as constants.  
Both $F$ and $R$ are simple differential rings with constant field $K$; so $F\otimes_K K'$ and $R\otimes_K K'$ are also simple differential rings, by \cite[Lemma 10.7]{Maurischat}, with constants $K'$.
By Lemma~\ref{lem simple}, $C_{F'}=C_{F\otimes_K K'}=K'$  and
$C_{\Frac(R\otimes_K K')} = C_{R\otimes_K K'} = K'$.
 
Since $R/F$ is a Picard-Vessiot ring, $R=F[Y,\det(Y)^{-1}]$ for some fundamental solution matrix $Y\in \GL_n(R)$ for a differential equation $\del(y)=Ay$ over $F$.
Thus $R'=F'[Y,\det(Y)^{-1}]$, where we view $R \subseteq R'$. 
Identifying $R\otimes_K K'$ with  $R\otimes_F (F\otimes_K K') \subseteq R'$, we have  
$R'=R\otimes_F F'=R\otimes_F\mathcal{S}^{-1}(F\otimes_K K')=\mathcal{S}^{-1}(R\otimes_F(F\otimes_K K'))=\mathcal{S}^{-1}(R\otimes_K K')$ and $\Frac(R')=\Frac(R\otimes_K K')$, where $\mathcal{S}$ is the set of non-zero elements in $F\otimes_K K'$.  So $C_{\Frac(R')}=K'=C_{F'}$.
By \cite[Cor.~2.7]{Dyc}, it then follows that $R'$ is simple and $R'/F'$ is a Picard-Vessiot ring for the differential equation $\del(y)=Ay$.

Let $G'$ denote the differential Galois group of $R'/F'$. We claim that $G'=G_{K'}$. For every $K'$-algebra $S$, there is an injective group homomorphism
\vspace{-.1cm}
\[G_{K'}(S)=\Aut^\del(R\otimes_K S/F\otimes_K S) \to  G'(S)=\Aut^\del(R'\otimes_{K'} S/F'\otimes_{K'} S),
\vspace{-.1cm}
\] 
using that $R'\otimes_{K'}S$ is a localization of $R\otimes_K S$. Conversely, every $\gamma \in G'(S)$ restricts to an injective differential homomorphism $R\otimes_K S\to R'\otimes_K S$. The matrix $B=Y^{-1}\gamma(Y) \in \GL_n(R'\otimes_K S)$ has constant entries and is thus contained in $\GL_n(S)$. Therefore, $\gamma(Y)=YB$ is contained in $R\otimes_K S$. Since $R=F[Y,\det(Y)^{-1}]$, we conclude that $\gamma(R\otimes_K S)= R\otimes_K S$. Thus $\gamma$ restricts to an element in $G_{K'}(S)$. Hence the homomorphism $G_{K'}(S)\to G'(S)$ is a bijection and it defines an isomorphism of linear algebraic groups $G_{K'}\to G'$.
\end{proof}

If $K'/K$ is algebraic, then $F\otimes_K K'$ is a field, and the statement and proof of the above proposition simplify. 
We will use Proposition~\ref{prop base change of PVR} in Sections~\ref{inverse} and~\ref{embedding} in the context of $F=K(x)$ and $F'=K'(x)$, with $K'/K$ not algebraic.

\section{The inverse differential Galois problem}\label{inverse}

In this section we solve the inverse differential Galois problem for rational function fields over a large field of constants having infinite transcendence degree over $\Q$.  Our strategy is to build on the main result of \cite{BHH}, which solved the problem in the case that the ground field is of the form $k_0((t))$.  Concerning the passage from that case to the case of large fields, we note that Laurent series fields are large; and in addition, any large field $k$ is existentially closed in the Laurent series field $k((t))$.

Our proof relies on the notion of ``descent''.  More precisely, if $F'/F$ is an extension of differential fields, we say that a Picard-Vessiot ring  $R'/F'$ \textit{descends to a Picard-Vessiot ring} over $F$ if there exists a Picard-Vessiot ring $R/F$ together with an $F'$-linear differential isomorphism $R\otimes_F F'\cong R'$.  In particular, given a field extension $K/k$,
a Picard-Vessiot ring $R$ over $K(x)$ descends to a Picard-Vessiot ring over $k(x)$ if there exists a Picard-Vessiot ring $R_0/k(x)$ together with a $K(x)$-linear differential isomorphism $R\cong R_0\otimes_{k(x)}K(x)$.

\begin{prop}\label{prop descent PVR}
Consider a rational function field $K(x)$ of characteristic zero with derivation $\del=d/dx$ and let $R/K(x)$ be a Picard-Vessiot ring with differential Galois group $G$. Let further
$k_0 \subseteq K$ be a subfield and let $G_0$ be a linear algebraic 
group over $k_0$ with $(G_0)_K=G$.  Then there is a finitely generated 
field extension $k_1/k_0$ with $k_1\subseteq K$ such that $R/K(x)$ 
descends to a Picard-Vessiot ring $R_1/k_1(x)$ with differential Galois 
group $(G_0)_{k_1}$.
\end{prop}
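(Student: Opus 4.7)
The strategy is descent, carried out in three stages. Fix a fundamental solution matrix $Y \in \GL_n(R)$ for some equation $\del(y) = Ay$ over $K(x)$, so $R = K(x)[Y, \det(Y)^{-1}] \cong K(x)[T, \det(T)^{-1}]/\mathfrak{p}$; by Noetherianity, $\mathfrak{p}$ is finitely generated. First, for the defining ring data, let $k' \subseteq K$ be the finitely generated extension of $k_0$ obtained by adjoining all coefficients in $K$ appearing in $A$ and in a chosen finite generating set of $\mathfrak{p}$. Setting $\mathfrak{p}' := \mathfrak{p} \cap k'(x)[T, \det(T)^{-1}]$, we have $\mathfrak{p}' \cdot K(x)[T, \det(T)^{-1}] = \mathfrak{p}$, so by faithful flatness of $K(x)/k'(x)$ the ring $R' := k'(x)[T, \det(T)^{-1}]/\mathfrak{p}' = k'(x)[Y, \det(Y)^{-1}]$ embeds in $R$ with $R' \otimes_{k'(x)} K(x) \cong R$.

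Second, for the group data, fix $k_0$-algebra generators $\xi_1, \ldots, \xi_m$ of $k_0[G_0]$. By the hypothesis $(G_0)_K = G$ we have $K[G] = K \otimes_{k_0} k_0[G_0]$, and under the identification of $K[G]$ with $C_{R \otimes_{K(x)} R}$, each $\xi_i$ is a polynomial in the entries of $Y^{-1} \otimes Y$ and $\det(Y^{-1} \otimes Y)^{-1}$ with finitely many coefficients in $K$. Enlarge $k'$ to $k'' \subseteq K$, still finitely generated over $k_0$, by adjoining those coefficients, so that each $\xi_i \in R'' \otimes_{k''(x)} R''$ where $R'' := k''(x)[Y, \det(Y)^{-1}]$.

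Third, to ensure the candidate descent is a Picard-Vessiot ring, we absorb any extra constants. The field $\Frac(R'')$ is finitely generated over $k''$, and by the classical theorem that every intermediate subfield of a finitely generated field extension is itself finitely generated, the field $k_1 := C_{\Frac(R'')} = K \cap \Frac(R'')$ is a finitely generated extension of $k''$ (hence of $k_0$) sitting inside $K$. Set $R_1 := k_1(x)[Y, \det(Y)^{-1}]$; then $R'' \subseteq R_1 \subseteq \Frac(R'')$, so $\Frac(R_1) = \Frac(R'')$ and $C_{\Frac(R_1)} = k_1$. Proposition \ref{prop criterion PV} applied to $L = \Frac(R_1)$ shows that $R_1$ is a Picard-Vessiot ring over $k_1(x)$ for $\del(y) = Ay$, and the descent-of-ideals argument from the first step still yields $R_1 \otimes_{k_1(x)} K(x) \cong R$, so $R$ descends to $R_1$.

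It remains to identify the Galois group $H$ of $R_1/k_1(x)$ with $(G_0)_{k_1}$. Corollary \ref{cor base change PVR k(x)} gives $k_1[H] \otimes_{k_1} K \cong K[G]$. The $\xi_i$ lie in $R_1 \otimes_{k_1(x)} R_1$ (since $R'' \otimes_{k''(x)} R''$ embeds there by faithful flatness of $k_1(x)/k''(x)$) and are constants, so they lie in $C_{R_1 \otimes_{k_1(x)} R_1} = k_1[H]$; thus $k_1[(G_0)_{k_1}] = k_1 \cdot k_0[G_0] \subseteq k_1[H]$ as $k_1$-subalgebras of $K[G]$. Since both have the same base change to $K$, namely $K[G]$, faithful flatness of $K/k_1$ promotes the inclusion to an equality, giving $H \cong (G_0)_{k_1}$. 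The main obstacle is precisely this last step: ensuring that the Galois group of $R_1$ is isomorphic to $(G_0)_{k_1}$ and not merely a $K/k_1$-form of it. This is resolved by the two-stage enlargement of the base field — first absorbing the $K$-coefficients that express $k_0[G_0]$-generators in terms of $Y^{-1} \otimes Y$, then absorbing extraneous constants in $\Frac(R'')$ — combined with the faithfully flat descent comparison between two $k_1$-forms of $K[G]$.
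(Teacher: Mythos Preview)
Your proof is correct and follows the same overall descent strategy as the paper, but with two tactical differences. For the constants, the paper shows directly that $C_{R_1}=k$ already at the first stage: if $c\in C_{R_1}$ then $c\otimes 1\in C_R=K$, and a basis argument in $R_1\otimes_{k(x)}K(x)$ forces $c\in k(x)$, hence $c\in k$; likewise differential simplicity of $R_1$ is checked directly by faithful flatness rather than via Proposition~\ref{prop criterion PV}. This makes your third-stage enlargement $k_1=C_{\Frac(R'')}$ and the appeal to finite generation of intermediate fields unnecessary. For the Galois group the paper goes the opposite way: rather than tracking generators of $k_0[G_0]$ into $k_1[H]$, it just notes that the Galois group $G_1$ of $R_1$ satisfies $(G_1)_K\cong(G_0)_K$ and passes to a further finitely generated extension $k_1/k$ over which the two groups become isomorphic (a spreading-out argument for the Isom scheme). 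Your route is more explicit and avoids that extra extension, but the last step needs a word of care: faithful flatness gives the equality $k_1[(G_0)_{k_1}]=k_1[H]$ only once you know the multiplication map $k_1[H]\otimes_{k_1}K\to K[G]$ is an isomorphism (not just that the two sides are abstractly isomorphic via Corollary~\ref{cor base change PVR k(x)}). This follows from the torsor identity $R_1\otimes_{k_1(x)}R_1\cong R_1\otimes_{k_1}k_1[H]$, which shows a $k_1$-basis of $k_1[H]$ remains $K$-linearly independent in $K[G]$; and equality as $k_1$-subalgebras of $K[G]$ then gives $H\cong(G_0)_{k_1}$ as group schemes since both Hopf structures are restrictions of that on $K[G]$.
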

\begin{proof}
As $R$ is a finitely generated $K(x)$-algebra, we can write $R$ as a quotient of a polynomial ring $K(x)[X_1,\dots,X_r]$ by an ideal $J$. We fix generators $g_1,\dots,g_m$ of $J$: $$R=K(x)[X_1,\dots,X_r]/(g_1,\dots,g_m).$$
We fix an extension of $\del$ from $K(x)$ to $K(x)[X_1,\dots,X_r]$ such that this derivation induces the given derivation on $R$. In particular, $J$ is a differential ideal in $K(x)[X_1,\dots,X_r]$. We can now choose a finitely generated field extension $k/k_0$ with $k\subseteq K$ such that  
\begin{enumerate}[(1)]
	\item $g_i \in k(x)[X_1,\dots,X_r]$ for all $i=1,\dots,m$, and
	\item $\del(X_i)\in k(x)[X_1,\dots,X_r]$ for all $i=1,\dots,r$ and
	\item $R=K(x)[Y,\det(Y)^{-1}]$ for a fundamental solution matrix $Y \in \GL_n(R)$ with the property that all entries of $Y$ have representatives in $k(x)[X_1,\dots,X_r]$, and
	\item the element in $R$ represented by $X_i$ can be written as a polynomial expression over $k(x)$ in the entries of $Y$ and $\det(Y)^{-1}$  for all $i=1,\dots,r$.
\end{enumerate}
Property (2) implies that $k(x)[X_1,\dots,X_r]$ is a differential subring of $K(x)[X_1,\dots,X_r]$.
Set $I=J\cap k(x)[X_1,\dots,X_r]$. Then $I$ is a differential ideal in $k(x)[X_1,\dots,X_r]$ and it contains $g_1,\dots,g_m$ by (1). As $K(x)/k(x)$ is faithfully flat, $I$ is thus generated by $g_1,\dots,g_m$. We define $R_1=k(x)[X_1,\dots,X_r]/I$. 
Hence $$R_1=k(x)[X_1,\dots,X_r]/(g_1,\dots,g_m)$$ is a differential ring and as $K(x)$ is flat over $k(x)$, there is a $K(x)$-linear isomorphism of differential rings $$R_1\otimes_{k(x)}K(x)\cong R.$$ 

Let $c\in C_{R_1}$. As $C_R=K$, there exists an $a\in K$ such that we have  $c\otimes 1=1\otimes a$ in $R_1\otimes_{k(x)}K(x)$. Thus $a\in k(x)$ and $c=a\in k$. Hence $C_{R_1}=k$. 

Next, consider a non-zero differential ideal $I_1\subseteq R_1$. Then $J_1=I_1\otimes_{k(x)}K(x)$ is a non-zero differential ideal in $R_1\otimes_{k(x)}K(x)\cong R$, and as $R$ is a simple differential ring, we conclude $1 \in J_1$. As $K(x)/k(x)$ is faithfully flat, $R_1\otimes_{k(x)}K(x)$ is faithfully flat over $R_1$ and therefore $I_1=J_1\cap R_1$. Hence $1 \in I_1$ and we conclude that $R_1$ is a simple differential ring.  

Finally, (3) implies that the  matrix $Y$ has entries in the subring $R_1$ of $R$. Its determinant $\det(Y) \in R_1$ is a unit when considered as an element in $R_1\otimes_{k(x)}K(x)$ and thus $\det(Y)$ is invertible in $R_1$, so $Y\in \GL_n(R_1)$. Set $A=\del(Y)Y^{-1}$. As $Y$ is a fundamental solution matrix for $R/K(x)$, $A$ has entries in $K(x)$. On the other hand, $Y\in \GL_n(R_1)$ implies that the entries of $A$ are contained in $R_1$. Hence $A$ has entries in $R_1\cap K(x)=k(x)$ and thus $Y$ is a fundamental solution matrix for a differential equation over $k(x)$. Furthermore, $R_1=k(x)[Y,\det(Y)^{-1}]$ by (4). Hence $R_1$ is a Picard-Vessiot ring over $k(x)$. 

Let $G_1$ be the differential Galois group of $R_1/k(x)$. Then $G_1$ is a linear algebraic group over $k$ and $(G_1)_K=G$ by Proposition~\ref{prop base change of PVR}. Therefore, $(G_1)_K=((G_0)_{k})_K$, and hence there exists a finite extension $k_1/k$ with 

\smallskip

\noindent (5) \ $(G_1)_{k_1}=(G_0)_{k_1}$.

\smallskip

 \noindent We conclude that $R$ descends to the Picard-Vessiot ring $R_1\otimes_{k(x)} k_1(x)$ over $k_1(x)$ with differential Galois group $(G_0)_{k_1}$ by Proposition~\ref{prop base change of PVR}.
\end{proof}

An analog of Proposition~\ref{prop base change of PVR} in the context of differential embedding problems can be found in the next section (Proposition~\ref{prop descent EBP} below).  We illustrate the above proposition with the following example.  Here we take $K$ in the proposition to be a Laurent series field, since that is the type of field that will be used in the next result; and we illustrate how a Picard-Vessiot ring over $K(x)$ can be descended to the rational function field over a finitely generated field of constants.

\begin{ex} \label{descent ex}
	\begin{enumerate}
	\item \label{descent ex O2}
Let $E$ be a subfield of $\C$, let $K = E((t))$, and let $G$  be the orthogonal group 
$\Orth_{2,K}$.
Here $G$ is induced by the group $G_0 = \Orth_{2,\Q}$ over $k_0=\Q \subset K$.
Endow $K(x)$ with the derivation $\partial = d/dx$ and 
consider the differential equation $\partial Y = AY$ over $K(x)$ with
$A = \begin{pmatrix}
t & -1 \\ 1&t
\end{pmatrix}$.
Then a Picard-Vessiot ring $R/K(x)$ for this differential equation is given by
\[R=K(x)[y_1,y_2,(y_1^2+y_2^2)^{-1}] \subset K((x)),\] 
with $y_1 = e^{tx}\cos(x) \in K((x))$
and $y_2 = e^{tx}\sin(x) \in K((x))$, so that $y_1^2+y_2^2=e^{2tx}$; and 
a fundamental solution matrix is
$\begin{pmatrix} y_1 & -y_2 \\ y_2 & y_1 \end{pmatrix}$.  The differential Galois group of $R$ over $K(x)$ is then $G$.  This Picard-Vessiot ring descends to a Picard-Vessiot ring over $k_1(x)$ with group $(G_0)_{k_1}$ (satisfying conditions (1)-(5) in the above proof) for a finitely generated field extension $k_1/\Q$ with $k_1 \subseteq K$, as in Proposition~\ref{prop descent PVR}.  Namely, we may take $k_1 = \Q(t)$.
	\item \label{descent ex Gm2}	
Let $K= \C((t))$ and now consider the group $G = {\mathbb{G}}^2_{{\mathrm m},K}$, which is induced by 
$G_0 := {\mathbb{G}}^2_{{\mathrm m},\Q}$.  Since $i \in K$, the groups 
${\mathbb{G}}^2_{{\mathrm m},K}$ and $\Orth_{2,K}$ are isomorphic; but the groups
${\mathbb{G}}^2_{{\mathrm m},\Q}$ and $\Orth_{2,\Q}$ are not.  So if we consider 
the same differential equation as in part~\ref{descent ex O2}, then the descent of $R$ to $\Q(t)(x)$ considered above does not have differential Galois group $(G_0)_{\Q(t)}$, but rather $\Orth_{2,\Q(t)}$.  On the other hand, over the field $k_1 := \Q(i,t)$, these two groups become isomorphic.  So the above Picard-Vessiot ring over $K(x)$ with group $G$ descends to a Picard-Vessiot ring over $k_1(x)$ with group $(G_0)_{k_1}$.
	\end{enumerate}
\end{ex}

We now come to the main result of this section, the second part of which is Theorem~\ref{inverse problem intro} from the Introduction.

\begin{thm}\label{thm inverse problem}

	\begin{enumerate}
		\item  Let $k_0$ be a field of characteristic zero, and let $G$ be a linear algebraic group over~$k_0$. Then there exists a constant $c_G \in \N$, depending only 
		on $G$, with the following property:  For all large fields $k$ with 
		$k_0\subseteq k$ and $\td(k/k_0)\geqslant c^{\phantom|}_G$,  $G_k$ is a differential Galois group over $(k(x), \frac{d}{dx})$.
		\item If $k$ is a large field of infinite transcendence degree
		over ${\mathbb Q}$, then every linear algebraic $k$-group is a 
		differential Galois group over $k(x)$ endowed with $\del=d/dx$. 
	\end{enumerate}
\end{thm}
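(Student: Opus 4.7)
The strategy is the one flagged in the introduction: reduce to the already-understood case of Laurent series fields, then descend to a finitely generated subfield, and finally use largeness to re-embed that subfield into the target large field. Because of the descent/embedding trick, the integer $c_G$ in (a) essentially measures how much transcendence room is used by the descent.

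For part (a), the plan is to proceed in four steps. \textbf{Step 1 (realization over Laurent series).} The field $K := k_0((t))$ is itself a large field of characteristic zero, so by the patching-based inverse problem over complete discretely valued fields developed in \cite{BHH} (and reused in \cite{BHH:embed} for the induced split embedding problem, of which the inverse problem is the trivial case), there exists a Picard-Vessiot ring $R_0 / K(x)$ with derivation $d/dx$ and differential Galois group $G_K = (G)_K$. \textbf{Step 2 (descent to a finitely generated subfield).} Apply Proposition~\ref{prop descent PVR} to $R_0$, with this $K$, the given $k_0$, and $G_0 := G$: one obtains a finitely generated extension $k_1/k_0$ with $k_1 \subseteq K = k_0((t))$, and a Picard-Vessiot ring $R_1/k_1(x)$ with differential Galois group $G_{k_1}$ to which $R_0$ descends. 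Define $c_G := \td(k_1/k_0) \in \N$; once the choices in Steps 1 and 2 are fixed, this depends only on $G$ (and implicitly on $k_0$). \textbf{Step 3 (embedding into $k$).} Let $k$ be any large overfield of $k_0$ with $\td(k/k_0) \geqslant c_G$. Since $k_1 \subseteq k_0((t))$, the extension $k_1/k_0$ is finitely generated, and $\td(k_1/k_0) = c_G \leqslant \td(k/k_0)$, Corollary~\ref{cor large} produces a $k_0$-embedding $\iota : k_1 \hookrightarrow k$. \textbf{Step 4 (base change to $k$).} View $k(x)$ as an extension of $k_1(x)$ via $\iota$. Corollary~\ref{cor base change PVR k(x)} then shows that $R_1 \otimes_{k_1(x)} k(x)$ is a Picard-Vessiot ring over $k(x)$ with differential Galois group $(G_{k_1})_k = G_k$, which realizes $G_k$ as a differential Galois group over $(k(x), d/dx)$.

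Part (b) follows from (a) by a spreading-out argument. Given a large field $k$ of infinite transcendence degree over $\Q$ and a linear algebraic group $G$ over $k$, the finite type affine group scheme $G$ is cut out by finitely many polynomial relations, so it is defined over some finitely generated subfield $k_0 \subseteq k$ with $\Q \subseteq k_0$; write $G = (G_0)_k$ for some $G_0/k_0$. Since $\td(k_0/\Q)$ is finite while $\td(k/\Q)$ is infinite, $\td(k/k_0)$ is infinite, hence in particular at least $c_{G_0}$, so (a) applies and $(G_0)_k = G$ is realized as a differential Galois group over $k(x)$.

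The main obstacle is packaged into Step 1: without the patching-based realization of linear algebraic groups over $k_0((t))(x)$ from \cite{BHH}, the whole strategy collapses. Given that input, Step 2 is a finite-generation bookkeeping argument carried out in Proposition~\ref{prop descent PVR}, and Steps 3--4 reduce respectively to Corollaries~\ref{cor large} and \ref{cor base change PVR k(x)}. The conceptual novelty lies in the interplay of descent and existential closedness through Corollary~\ref{cor large}: it is precisely the fact that every large field $k$ is existentially closed in $k((t))$ that lets us transport a Picard-Vessiot ring constructed over $k_0((t))(x)$ down to a finitely generated subfield $k_1 \subseteq k_0((t))$ and then back up into $k$, giving a uniform construction across all sufficiently transcendental large overfields of $k_0$.
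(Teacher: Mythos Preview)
Your proof is correct and follows essentially the same route as the paper: realize $G_K$ over $K(x)$ with $K=k_0((t))$ via \cite[Thm.~4.5]{BHH}, descend to a finitely generated $k_1\subseteq K$ using Proposition~\ref{prop descent PVR}, set $c_G=\td(k_1/k_0)$, embed $k_1\hookrightarrow k$ via Corollary~\ref{cor large}, and base change with Corollary~\ref{cor base change PVR k(x)}; part (b) is deduced from (a) by descending $G$ to a finitely generated $k_0/\Q$. The only cosmetic difference is that the paper cites \cite[Thm.~4.5]{BHH} directly for Step~1 rather than routing through \cite{BHH:embed}.
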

\begin{proof}
Let $K:=k_0((t))$ be the Laurent series field over $k_0$. Then $\del=d/dx$ extends from $k(x)$ to $K(x)$ and by \cite[Thm.~4.5]{BHH}, there exists a Picard-Vessiot ring $R/K(x)$ with differential Galois group $G_K$. Then by Proposition \ref{prop descent PVR}, there exists a finitely generated field extension $k_1/k_0$ with $k_1\subseteq K$ such that $R/K(x)$ descends to a Picard-Vessiot ring $R_1/k_1(x)$ with differential Galois group $G_{k_1}$. Set $c^{\phantom|}_G:=\td(k_1/k_0)$. 

Let $k$ be a large field with $k_0\subseteq k$ and $\td(k/k_0)\geqslant c^{\phantom|}_G$. Then by Corollary~\ref{cor large}, there exists a $k_0$-embedding $k_1\hookrightarrow k$. To conclude the proof of (a), we can now base change $R_1$ to $R_1\otimes_{k_1(x)}k(x)$, and obtain a Picard-Vessiot ring over $k(x)$ with differential Galois group $(G_{k_1})_k=G_k$ by Proposition~\ref{prop base change of PVR}. 

The proof of assertion (b) follows easily from (a), by noticing that every linear algebraic $k$-group $G$ descends to a subfield $k_0\subseteq k$, which is finitely generated over ${\mathbb Q}$. 
\end{proof}

\begin{ex} \label{inverse ex}
	\begin{enumerate}
	\item \label{inverse ex O2}
Let $k_0 = \Q$ and $G = \Orth_{2,\Q}$.  Proceeding as in the proof of Theorem~\ref{thm inverse problem}, let $K = \Q((t))$ and consider a Picard-Vessiot ring $R/K(x)$ with differential Galois group $G_K$.  Specifically, we may choose $R$ as in Example~\ref{descent ex}\ref{descent ex O2} (with $E=\Q$).  As in that example, $R$ descends to a Picard-Vessiot ring over $k_1=\Q(t)$.   If $k$ is a large field of transcendence degree at least one over $\Q$, then we can embed $k_1$ into $k$, and we can then base change the Picard-Vessiot ring over $k_1(x)$ to obtain one over $k(x)$.  
	\item \label{inverse ex O2n}
More generally, for any positive integer $n$, and with $k_0$ and $K$ as in part~\ref{inverse ex O2}, 
we may consider the differential equation $\partial Y = A_n Y$, where $A_n$ is the $2n \times 2n$ block diagonal matrix whose $i$-th block is $A_i = \begin{pmatrix}
t_i & -1 \\ 1&t_i
\end{pmatrix}$, where $t_1,\dots,t_n$ are sufficiently general (e.g., algebraically independent) elements of $K$.  A 
Picard-Vessiot ring $R/K(x)$ for this differential equation is given by 
\[R=K(x)[y_{1i},y_{2i},(y_{1i}^2+y_{2i}^2)^{-1}\,|\,i=1,\dots,n] \subset K((x)),\]
with differential Galois group $\Orth_{2,K}^n$.  This Picard-Vessiot ring descends to a Picard-Vessiot ring over $k_1(x)$ with group $(\Orth_2^n)_{k_1}$, where $k_1 = \Q(t_1,\dots,t_n) \subset K$.	If $k$ is a large field of transcendence degree at least $n$, then we can embed $k_1$ into $k$ and we obtain a Picard-Vessiot ring over $k(x)$ with group $\Orth_{2,k}^n$.
	\end{enumerate}
\end{ex}

\begin{rem}
\begin{enumerate}
\item Theorem~\ref{thm inverse problem} guarantees the existence of a Picard-Vessiot ring with prescribed differential Galois group. By a standard Tannakian argument, one can moreover prescribe the representation, i.e., the action on the solution space (see \cite{BHH}, Prop.~3.2).

\item \label{rem large transcendence deg}
There exist large fields of arbitrary transcendence degree over $\Q$.  Namely, for any non-zero cardinal $d$, if $K = \Q(x_\alpha\,|\, \alpha \in I)$ where
$\{x_\alpha\,|\, \alpha \in I\}$ is a set of $d$ variables, then the algebraic closure $k$ of $K(t)$ in $K((t))$ is a large field with $\td(k/\Q) = d$.  The field of algebraic $p$-adics (i.e., the relative algebraic closure of $\Q$ in $\Q_p$) is large of transcendence degree equal to zero.  
\end{enumerate}

\end{rem}

By \cite[Cor. 4.14]{BHH} (this is an adaption of a trick due to Kovacic), Part~(b) of Theorem~\ref{thm inverse problem} extends from the rational function field $k(x)$ to all finitely generated field extensions with arbitrary derivations that have field of constants $k$:
\begin{cor}\label{cor kovacic}
	Let $k$ be large field of infinite transcendence degree over $\Q$. Let $F$ be a differential field with a non-trivial derivation and field of constants $k$. If $F/k$ is finitely generated, then every linear algebraic group over $k$ is a differential Galois group over $F$.
\end{cor}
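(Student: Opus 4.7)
The plan is to reduce the general case to the rational function field case already established in Theorem~\ref{thm inverse problem}(b), and to transfer the Picard-Vessiot realization from $k(x)$ to $F$ by invoking the ``Kovacic trick'' recorded in \cite[Cor.~4.14]{BHH}. So the proof is a two-step combination: first realize $G$ over $k(x)$, then transport that realization to $F$.

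First, let $G$ be an arbitrary linear algebraic group over $k$. Since $k$ is large of infinite transcendence degree over $\Q$, Theorem~\ref{thm inverse problem}(b) provides a Picard-Vessiot ring $R_0$ over $(k(x), d/dx)$ with differential Galois group $G$, arising from some linear differential system $\partial(y)=Ay$ with $A \in k(x)^{n \times n}$.

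Second, transfer this realization from $k(x)$ to $F$. Because the derivation on $F$ is non-trivial and $C_F = k$, one can pick $t \in F$ with $\partial t \ne 0$; then $t$ is transcendental over $k$ (as $k = C_F$), so $k(t) \subseteq F$ is a rational function field. Identifying $k(x) \cong k(t)$ via $x \mapsto t$ and rescaling the equation $\partial(y)=Ay$ by the factor $\partial t$ produces a differential equation over $k(t)$ (with the derivation inherited from $F$) whose Picard-Vessiot ring has the same Galois group $G$. One then extends to $F$: since $F/k(t)$ is finitely generated and $C_F = k = C_{k(t)}$, no new constants appear when we adjoin a fundamental solution matrix to $F$, so by Proposition~\ref{prop criterion PV} the resulting ring is a Picard-Vessiot ring over $F$ with differential Galois group $G$. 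This is precisely the content of \cite[Cor.~4.14]{BHH}, which we cite directly.

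The only substantive point, which is the main obstacle to be careful about, is the preservation of the Galois group when passing from $k(t)$ to $F$; but this is handled by the fact that $F$ has the same field of constants $k$ as $k(t)$, together with the criterion of Proposition~\ref{prop criterion PV}. Once the cited corollary is invoked, the proof is complete: the hypotheses on $k$ (large, infinite transcendence degree over $\Q$) supply the input to Theorem~\ref{thm inverse problem}(b), and the hypotheses on $F$ (finitely generated over $k$, constant field $k$, non-trivial derivation) match exactly those required by \cite[Cor.~4.14]{BHH}.
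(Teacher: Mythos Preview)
Your proof is correct and matches the paper's approach exactly: the paper also derives the corollary directly from Theorem~\ref{thm inverse problem}(b) together with \cite[Cor.~4.14]{BHH}. Your informal unpacking of the Kovacic trick is useful context, though note that Proposition~\ref{prop criterion PV} by itself only shows the resulting ring is Picard-Vessiot, not that its Galois group equals $G$---that preservation is part of what \cite[Cor.~4.14]{BHH} proves, and you rightly defer to the citation for it.
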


This result in particular applies if the field of constants $k$ is $\Q_p$ (or, more generally, a Henselian valued field of infinite transcendence degree) or if $k=k_0((t_1,\dots,t_n))$, the fraction field of a power series ring in several variables. 

\section{Differential embedding problems}\label{embedding}

In this section, we solve split differential embedding problems over $k(x)$ for large fields $k$ of infinite transcendence degree over~$\Q$.  As in Section~\ref{inverse}, we build on the Laurent series case, relying here on \cite{BHH:embed}, 
where induced differential split embedding problems were solved via patching methods.  In this way, we parallel the strategy that was used in usual 
Galois theory, where the solvability of finite split embedding problems for function fields over large fields was deduced from an analogous assertion over Laurent series fields; see  
\cite{Po96}, \cite{HJ98}, and \cite{HS05}.  But in the differential context, new issues need to be treated.

To this end, we work with differential torsors, which were introduced in \cite{BHHW}. Let $F$ be a differential field of characteristic zero with field of constants $K$ and let $G$ be a linear algebraic group over $K$. We equip its coordinate ring $K[G]$ with the trivial derivation, hence $F[G_F]=F\otimes_K K[G]$ is a differential ring extension of $F$. We write $F[G]=F[G_F]$. A \textit{differential $G_F$-torsor} is a $G_F$-torsor $X=\Spec(R)$ such that $R$ is a differential ring extension of $F$ and such that the co-action $\rho \colon R\to R\otimes_F F[G]$ is a differential homomorphism. A \textit{morphism of differential $G_F$-torsors} $\p\colon X\to Y$ is a morphism of $G_F$-torsors (i.e., a $G_F$-equivariant morphism of varieties) such that the corresponding homomorphism $F[Y]\to F[X]$ is a differential homomorphism. 

 If $\Spec(R)$ is a differential $G_F$-torsor and $H$ is a closed subgroup of $G$, the ring of invariants is defined as $R^{H_F}=\{r \in R \mid \rho(r)=r\otimes 1\}$. If $N$ is a normal closed subgroup of $G$, then $\Spec(R^{N_F})$ is a differential $(G/N)_F$-torsor and the co-action $R^{N_F}\to R^{N_F}\otimes_F F[G/N]=R^{N_F}\otimes_F F[G]^{N_F}$ is obtained from restricting the co-action $\rho\colon R\to R\otimes_F F[G]$ (see Prop.\ 1.17 together with Prop.\ A.6(b) in \cite{BHHW}).
 
 By Kolchin's theorem, if $R/F$ is a Picard-Vessiot ring with differential Galois group $G$, then $\Spec(R)$ is a $G_F$-torsor. The co-action $\rho \colon R \to R\otimes_F F[G]$ can be described explicitly as follows. Let $Y\in \GL_n(R)$ be a fundamental solution matrix, i.e., $R=F[Y,\det(Y)^{-1}]$. Recall that $K[G]=C_{R\otimes_F R}$ is generated by the entries of the matrix $Y^{-1}\otimes Y$ and its inverse. Then $\rho$ is determined by setting $\rho(Y)=Y\otimes(Y^{-1}\otimes Y)$. Conversely, if $X=\Spec(R)$ is a differential $G_F$-torsor with the property that $R$ is a simple differential ring and $C_R=K$, then $R$ is a Picard-Vessiot ring over $F$ with differential Galois group $G$ (\cite[Prop. 1.12]{BHHW}). 
 
 \begin{lem}\label{lemma coaction}
 Let $K/k$ be a field extension in characteristic zero and let $F_1$ be a differential field with field of constants $k$. We equip $K$ with the trivial derivation and set $F=\Frac(F_1\otimes_k K)$. Let further $G$ be a linear algebraic group over $k$. Assume that we are given a Picard-Vessiot ring $R/F$ with differential Galois group $G_K$ which descends to a Picard-Vessiot ring $R_1/F_1$ with differential Galois group $G$. Then the following holds. 
 \begin{enumerate}
 	\item\label{item1} The co-action $\rho \colon R\to R\otimes_{F} F[G]$ restricts to the co-action $\rho_1\colon R_1 \to R_1\otimes_{F_1}F_1[G]$.
 	\item\label{item2} For every closed subgroup $H$ of $G$, the isomorphism $R_1\otimes_{F_1}F \cong R$ restricts to an isomorphism $R_1^{H_{F_1}}\otimes_{F_1}F \cong R^{H_F}$. 
 \end{enumerate}
 \end{lem}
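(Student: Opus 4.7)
The plan is to reduce both statements to explicit computation with a single fundamental solution matrix shared by $R_1$ and $R$, and then deduce part~(b) from part~(a) by a flat base change argument. Fix a fundamental solution matrix $Y\in\GL_n(R_1)$ with $R_1=F_1[Y,\det(Y)^{-1}]$; via the descent isomorphism $R_1\otimes_{F_1}F\cong R$, the same $Y$ serves as a fundamental solution matrix for $R$ over $F$, so $R=F[Y,\det(Y)^{-1}]$. Since $G$ is defined over $k$, we have the natural identifications $F[G]=F\otimes_k k[G]=F\otimes_{F_1}F_1[G]$, and correspondingly $R_1\otimes_{F_1}F_1[G]$ embeds into $R\otimes_F F[G]$.

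For part~(a), the co-action on a Picard-Vessiot ring is determined on generators by the formula recalled before the lemma: $\rho_1(Y)=Y\otimes(Y^{-1}\otimes Y)$, with $Y^{-1}\otimes Y\in\GL_n\bigl(C_{R_1\otimes_{F_1}R_1}\bigr)=\GL_n(k[G])$, and likewise $\rho(Y)=Y\otimes(Y^{-1}\otimes Y)$ with $Y^{-1}\otimes Y\in\GL_n\bigl(C_{R\otimes_F R}\bigr)=\GL_n(K[G_K])$. The map $R_1\otimes_{F_1}R_1\to R\otimes_F R$ induced by the descent sends constants to constants and identifies these two instances of $Y^{-1}\otimes Y$ under the natural inclusion $k[G]\hookrightarrow K\otimes_k k[G]=K[G_K]$. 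In particular $\rho(Y)$ lies in the subring $R_1\otimes_{F_1}F_1[G]$ of $R\otimes_F F[G]$, so by multiplicativity $\rho(R_1)\subseteq R_1\otimes_{F_1}F_1[G]$, and $\rho|_{R_1}$ and $\rho_1$ are ring homomorphisms that agree on the generators $Y,\det(Y)^{-1}$ of $R_1$; hence they coincide.

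For part~(b), let $\sigma\colon R\to R\otimes_F F[H]$ and $\sigma_1\colon R_1\to R_1\otimes_{F_1}F_1[H]$ denote the co-actions of $H_F$ and $H_{F_1}$ obtained from $\rho$ and $\rho_1$ by post-composing with the surjections induced by the inclusion $H\hookrightarrow G$. By part~(a) together with the identifications $R=R_1\otimes_{F_1}F$ and $F[H]=F_1[H]\otimes_{F_1}F$, the map $\sigma$ is simply $\sigma_1\otimes_{F_1}\id_F$. The invariant subrings $R^{H_F}$ and $R_1^{H_{F_1}}$ are by definition the kernels of the linear maps $r\mapsto\sigma(r)-r\otimes 1$ and $r\mapsto\sigma_1(r)-r\otimes 1$ respectively; since $F$ is flat over $F_1$ (a field extension), kernels commute with this base change, yielding $R^{H_F}=R_1^{H_{F_1}}\otimes_{F_1}F$. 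The main obstacle is purely bookkeeping: keeping straight the various tensor-product identifications and verifying that the Hopf algebras of constants $k[G]$ and $K[G_K]$ fit together correctly under the descent, so that the single matrix $Y^{-1}\otimes Y$ really plays the role demanded in both co-action formulas; once this is set up carefully, (a) follows from the explicit formula on generators and (b) is a routine flatness argument.
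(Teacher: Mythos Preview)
Your proof is correct and follows essentially the same approach as the paper: both fix a single fundamental solution matrix $Y\in\GL_n(R_1)$, use the explicit co-action formula $Y\mapsto Y\otimes(Y^{-1}\otimes Y)$ to verify $\rho|_{R_1}=\rho_1$ (equivalently $\rho=\rho_1\otimes_{F_1}\id_F$), and then deduce part~(b) from this compatibility. Your treatment of part~(b) via flatness of $F/F_1$ and kernels commuting with base change is slightly more explicit than the paper's one-line deduction, but the underlying idea is the same.
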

 \begin{proof}
 Let $Y\in \GL_n(R_1)$ be a fundamental solution matrix, i.e., $R_1=F_1[Y,\det(Y)^{-1}]$. As $R$ descends to $R_1$, there is a differential isomorphism $R_1\otimes_{F_1}F\cong R$ over $F$. Hence after identifying $R_1$ with a subring of $R$, we obtain an equality $R=F[Y,\det(Y)^{-1}]$. Define $Z=Y^{-1}\otimes Y \in \GL_n({R_1\otimes_{F_1}R_1})\subseteq \GL_n(R\otimes_F R)$. Recall that $F_1[G]=F_1[Z,\det(Z)^{-1}]$ and
 the co-action $\rho_1\colon R_1\to R_1\otimes_{F_1} F_1[G]$ is given by $Y\mapsto Y\otimes Z$. Similarly, the co-action $\rho\colon R\to R\otimes_{F} F[G]$ is given by $Y\mapsto Y\otimes Z$. Hence $\rho=\rho_1\otimes_{F_1}F$ and (a) follows.
 
 The $H$-invariants are defined as $R^H=\{f\in R \mid \rho(f)=f\otimes 1\}$ and so the equality $\rho=\rho_1\otimes_{F_1}F$ implies (b).  
 \end{proof}
 
 A \textit{split differential embedding problem  $(N\rtimes H,S)$ over $F$} consists of a semidirect product $N\rtimes H$ of linear algebraic groups over $K$ together with a Picard-Vessiot ring $S/F$ with differential Galois group $H$. 
 A \textit{proper solution} of $(N\rtimes H, S)$ is a Picard-Vessiot ring $R/F$ with differential Galois group $N\rtimes H$ and an embedding of differential rings $S\subseteq R$ such that the following diagram commutes:
 \[\xymatrix{ N\rtimes H  \ar@{->}[d]^\cong \ar@{->>}[rr]  && H \ar@{->}[d]_\cong \\
 	\underline{\Aut}^\del(R/F) \ar@{->>}[rr]^{\operatorname{res}}&    & \underline{\Aut}^\del(S/F)} \]
 
 Equivalently, $R$ is a Picard-Vessiot ring with differential Galois group $N\rtimes H$ such that there exists an isomorphism of differential $H_F$-torsors $\Spec(S)\cong \Spec(R^{N_F})$ (\cite[Lemma 2.8]{BHHW}).

\begin{prop}\label{prop descent EBP}
	Let $F=K(x)$ be a rational function field of characteristic zero  with derivation $\del=d/dx$ and let $k_0\subseteq K$ be a subfield. Let $(N_0\rtimes H_0, S_0)$ be a split differential embedding problem over $k_0(x)$.  Then for every  proper solution $R$ of the induced differential embedding problem $((N_0)_K\rtimes (H_0)_K, S_0\otimes_{k_0(x)}K(x))$ over $K(x)$, there exists a finitely generated field extension $k_1/k_0$ with $k_1 \subseteq K$ such that the following holds: $R/K(x)$ descends to a Picard-Vessiot ring $R_1/k_1(x)$ that is a proper solution of the split differential embedding problem $((N_0)_{k_1}\rtimes (H_0)_{k_1}, S_0\otimes_{k_0(x)}k_1(x))$ over $k_1(x)$.
\end{prop}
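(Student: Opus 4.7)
The plan is to follow the strategy of Proposition \ref{prop descent PVR}, first descending the Picard-Vessiot ring itself and then, at the cost of a further finite enlargement of the base field, descending the torsor isomorphism that witnesses the proper-solution property. Set $G_0 := N_0 \rtimes H_0$, viewed as a linear algebraic group over $k_0$; then $(G_0)_K$ is the differential Galois group of $R/K(x)$. First, I would apply Proposition \ref{prop descent PVR} to $R/K(x)$ with descent datum $G_0$: this produces a finitely generated extension $k_1'/k_0$ with $k_1' \subseteq K$ and a Picard-Vessiot ring $R_1'/k_1'(x)$ with differential Galois group $(G_0)_{k_1'} = (N_0)_{k_1'} \rtimes (H_0)_{k_1'}$, together with a $K(x)$-linear differential isomorphism $R \cong R_1' \otimes_{k_1'(x)} K(x)$. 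By Lemma \ref{lemma coaction}(b), taking $N_0$-invariants commutes with this base change, so $R^{(N_0)_{K(x)}} \cong (R_1')^{(N_0)_{k_1'(x)}} \otimes_{k_1'(x)} K(x)$.

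Next I would use the proper-solution hypothesis. Via \cite[Lemma 2.8]{BHHW}, it supplies an isomorphism of differential $(H_0)_{K(x)}$-torsors $S_0 \otimes_{k_0(x)} K(x) \xrightarrow{\sim} R^{(N_0)_{K(x)}}$, which composed with the identification above yields
$$
\psi : S_0 \otimes_{k_0(x)} K(x) \xrightarrow{\sim} (R_1')^{(N_0)_{k_1'(x)}} \otimes_{k_1'(x)} K(x).
$$
Now fix finitely many $k_0(x)$-algebra generators $s_1,\ldots,s_N$ of the finitely generated $k_0(x)$-algebra $S_0$. Each image $\psi(s_i \otimes 1)$ can be written as a finite sum $\sum_j r_{ij} \otimes a_{ij}$ with $r_{ij} \in (R_1')^{(N_0)_{k_1'(x)}}$ and $a_{ij} \in K(x)$, and collectively these involve only finitely many elements of $K$. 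Let $k_1/k_1'$ be the finitely generated subextension of $K/k_1'$ obtained by adjoining all such elements, and set $R_1 := R_1' \otimes_{k_1'(x)} k_1(x)$. By Corollary \ref{cor base change PVR k(x)}, $R_1$ is a Picard-Vessiot ring over $k_1(x)$ with differential Galois group $(G_0)_{k_1}$, and $R \cong R_1 \otimes_{k_1(x)} K(x)$.

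Finally, I would verify that $R_1$ properly solves the induced embedding problem. By construction, $\psi$ restricts to a $k_1(x)$-algebra map $\psi_1 : S_0 \otimes_{k_0(x)} k_1(x) \to R_1^{(N_0)_{k_1(x)}}$ (using Lemma \ref{lemma coaction}(b) once more to identify the target), and $\psi_1$ inherits from $\psi$ the property of being a differential $(H_0)_{k_1(x)}$-equivariant homomorphism, since these conditions are defined by equations over $k_1(x)$. Faithfully flat descent along $K(x)/k_1(x)$ then upgrades $\psi_1$ to an isomorphism, because its base change to $K(x)$ recovers $\psi$. A second appeal to \cite[Lemma 2.8]{BHHW} shows that $R_1$ is a proper solution of $((N_0)_{k_1} \rtimes (H_0)_{k_1},\, S_0 \otimes_{k_0(x)} k_1(x))$ over $k_1(x)$. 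The principal technical point I expect is the second step: arranging that a single finite enlargement of $k_1'$ inside $K$ simultaneously absorbs the scalars appearing in $\psi$ on a fixed finite set of generators of $S_0$, and then confirming that the restricted map retains differential, equivariant, and bijective character, which is where the combination of inheritance of equational conditions and faithfully flat descent does the work.
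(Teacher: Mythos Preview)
Your proposal is correct and follows essentially the same route as the paper: first invoke Proposition~\ref{prop descent PVR} to descend $R$ to some $R_1'/k_1'(x)$ with group $(G_0)_{k_1'}$, then enlarge $k_1'$ finitely inside $K$ so that the torsor isomorphism $\psi$ restricts to a map $\psi_1$ over $k_1(x)$, and finally verify that $\psi_1$ is a differential $(H_0)_{k_1(x)}$-equivariant isomorphism using Lemma~\ref{lemma coaction}. The only cosmetic difference is that the paper secures surjectivity of $\psi_1$ by also adjoining to $k_1$ the data of a finite generating set of $R^{N_{K(x)}}$ together with preimages in $S$, whereas you obtain bijectivity in one stroke via faithful flatness of $K(x)/k_1(x)$ from the fact that $\psi_1\otimes_{k_1(x)}K(x)=\psi$; both arguments are valid and equivalent in spirit.
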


\begin{proof} We define $N=(N_0)_K$, $H=(H_0)_K$, $S=S_0\otimes_{k_0(x)}K(x)$
	and further $G=N\rtimes H$ and $G_0=N_0\rtimes H_0$, hence $(G_0)_K=G$. By Proposition~\ref{prop descent PVR}, there exists a finitely generated extension $k_1/k_0$ with $k_1\subseteq K$ such that $R$ descends to a Picard-Vessiot ring $R_1/k_1(x)$ with differential Galois group $(G_0)_{k_1}$. Therefore, we can write $R=K(x)[X_1,\dots,X_r]/I$ and $R_1=k_1(x)[X_1,\dots,X_r]/I_1$, for some polynomial ring $K(x)[X_1,\dots,X_r]$ with a suitable derivation that restricts to $k_1(x)[X_1,\dots,X_r]$ and some differential ideal $I$ that is generated by its contraction $I_1=I\cap k_1(x)[X_1,\dots,X_r]$. Similarly, we can write $S_0=k_0(x)[Y_1,\dots,Y_s]/J_0$, $S=K(x)[Y_1,\dots,Y_s]/J$ with $J=J_0\otimes_{k_0(x)}K(x)$. We define $S_1=S_0\otimes_{k_0(x)}k_1(x)$. Then $S_1=k_1(x)[Y_1,\dots,Y_s]/J_1$ with $J_1=J_0\otimes_{k_0(x)}k_1(x)$. Since $K(x)/k_1(x)$ is faithfully flat, $J_1=J\cap k_1(x)[Y_1,\dots,Y_s]$. 
Let $$\varphi \colon S\to R^{N_{K(x)}}$$ be the given isomorphism of $H_{K(x)}$-torsors. After passing from $k_1$ to a finitely generated extension, we may assume that 
\begin{enumerate}[(1)]
	\item $\varphi$ maps the elements in $S=K(x)[Y_1,\dots,Y_s]/J$ represented by $Y_1,\dots,Y_s$ to elements in $R=K(x)[X_1,\dots,X_r]/I$ that are represented by elements in $k_1(x)[X_1,\dots,X_r]$
	\item $R^{N_{K(x)}}$ is generated as a $K(x)$-algebra by finitely many elements $\alpha_1,\dots,\alpha_m \in R=K(x)[X_1,\dots,X_r]/I$ with the property that all $\alpha_1,\dots,\alpha_m$ are represented by elements in $k_1(x)[X_1,\dots,X_r]$
	\item for $i=1,\dots,m$, $\alpha_i=\p(\beta_i)$ for an element $\beta_i \in S=K(x)[Y_1,\dots,Y_s]/J$ that is represented by an element in $k_1(x)[Y_1,\dots,Y_s]$.
\end{enumerate}

For the sake of simplicity, we will write expressions such as $N_{k_1(x)}$, $H_{k_1(x)}$ meaning $(N_0)_{k_1(x)}$, $(H_0)_{k_1(x)}$. We will also write expressions such as $k_1[G]$, $k_1[H]$ meaning $k_1[G_0]$ and $k_1[H_0]$, respectively.

Property (1) implies $\p(S_1)\subseteq R_1\cap R^{N_{K(x)}}$ and as $R_1\cap R^{N_{K(x)}}=R_1^{N_{k_1(x)}}$ by Lemma \ref{lemma coaction}.\ref{item1} we conclude that $\p$ restricts to an injective differential homomorphism $$\p_1\colon S_1\to R_1^{N_{k_1(x)}}.$$
It remains to show that $\p_1$ is an isomorphism of $H_{k_1(x)}$-torsors.

We claim that $R_1^{N_{k_1(x)}}=k_1[\alpha_1,\dots,\alpha_m]$. Since $R_1\cap R^{N_{K(x)}}=R_1^{N_{k_1(x)}}$, Property (2) implies that $\alpha_i$ is contained in $R_1^{N_{k_1(x)}}$ for all $i$, and hence $R_1^{N_{k_1(x)}}\supseteq k_1[\alpha_1,\dots,\alpha_m]$. On the other hand, $\alpha_1,\dots,\alpha_m$ generate $R^{N_{K(x)}}$, that is, $$R^{N_{K(x)}}=k_1[\alpha_1,\dots,\alpha_m]\otimes_{k_1(x)}K(x).$$ By Lemma \ref{lemma coaction}.\ref{item2}, we also have an equality $R^{N_{K(x)}}=R_1^{N_{k_1(x)}}\otimes_{k_1(x)}K(x)$ and thus $$R_1^{N_{k_1(x)}}\otimes_{k_1(x)}K(x)=k_1[\alpha_1,\dots,\alpha_m]\otimes_{k_1(x)}K(x)$$ and we conclude $$R_1^{N_{k_1(x)}}=k_1[\alpha_1,\dots,\alpha_m].$$

Therefore, Property (3) implies that $\p_1$ is surjective. Finally, since $\p$ is $H_{K(x)}$-equivariant, we conclude that its restriction is $H_{k_1(x)}$-equivariant, where we use Lemma \ref{lemma coaction}.\ref{item1} together with the fact that the co-action of $H_{K(x)}$ on $R^{N_K(x)}$ is given by restricting $R\to R\otimes_F F[G]$ to $R^{N_F}\to R^{N_F}\otimes_F F[G]^{N_F}= R^{N_F}\otimes_F F[H]$.
\end{proof}

\begin{ex} \label{SEP descent ex}
Take $K=\Q((t))$ and $k_0 = \Q \subset K$.  Let $G$ be the Borel subgroup $B_{2,\Q} \subset \SL_{2,\Q}$ consisting of matrices of the form 
$\begin{pmatrix}
\alpha & \beta \\ 0&\alpha^{-1}
\end{pmatrix}$.  Thus $G$ is isomorphic to the semi-direct product 
${\mathbb{G}}_{{\mathrm a},\Q} \rtimes {\mathbb{G}}_{{\mathrm m},\Q}$,
with $\alpha \in {\mathbb{G}}_{{\mathrm m},\Q}$ conjugating $\beta \in {\mathbb{G}}_{{\mathrm a},\Q}$ to $\alpha^2 \beta$.  The ring $S_0 := \Q(x)[e^x,e^{-x}] \subset \Q((x))$ is a Picard-Vessiot ring over $\Q(x)$ with group ${\mathbb{G}}_{{\mathrm m},\Q}$, with respect to the derivation $\partial = d/dx$; here 
$\alpha \in {\mathbb{G}}_{{\mathrm m},\Q}$ takes $e^x \mapsto \alpha e^x$.  Thus we have a split differential embedding problem $\mathcal{E} = ({\mathbb{G}}_{{\mathrm a},\Q} \rtimes {\mathbb{G}}_{{\mathrm m},\Q}, S_0)$ over $\Q(x)$, which induces such an embedding problem $\mathcal{E}_K$ over $K(x)$.  
Let $u$ be a nonzero element of $K$, let $z \in K((x))$ be an element satisfying $\partial(z) = \frac{1}{t+x}e^{-2x} \in K[[x]]$,
and let $y = ue^xz$.  Note that $z$ (and hence also $y$) is transcendental over the fraction field of $S_0$ because the exponential integral is not an elementary function.
Let $A = \begin{pmatrix} 1 & \frac{u}{t+x} \\ 0  & -1\end{pmatrix}$.
Then $R = K(x)[e^x,e^{-x},z]  = K(x)[e^x,e^{-x},y] \subset K((x))$ is a Picard-Vessiot ring for the differential equation $\partial Y = AY$ over $K(x)$, with a 
fundamental solution matrix given by $Y = \begin{pmatrix} e^x & y \\ 0 & e^{-x} \end{pmatrix}$.  The differential Galois group of $R$ over $K(x)$ is $G_K$, with 
$\alpha \in {\mathbb{G}}_{{\mathrm m},K}$ taking $e^x \mapsto \alpha e^x$ and $z \mapsto \alpha^{-2}z$, so that $y \mapsto \alpha^{-1}y$; while 
$\beta \in {\mathbb{G}}_{{\mathrm a},K}$ fixes $e^x$ and takes $y \mapsto y + \beta$.  Thus $R$ is a 
proper solution to the embedding problem $\mathcal{E}_K$.   
If we take $k_1 = \Q(t,u) \subset K$, then $R$ descends to a proper solution to the induced split differential embedding problem $\mathcal{E}_{k_1}$ over $k_1(x)$ by the proof of Prop. \ref{prop descent EBP}. 
\end{ex}

The main result of this article is the following theorem, whose second part is Theorem~\ref{main thm intro} from the Introduction.

\begin{thm}\label{main}
			\hspace{1em}
	\begin{enumerate}
		\item \label{main_a}
		Let $k_0$ be a field of characteristic zero, and let $\mathcal{E}=(N_0\rtimes H_0, S_0)$ be a split differential embedding problem over $(k_0(x), \frac{d}{dx})$. Then there is a constant $c_\mathcal{E} \in \N$, depending only on $\mathcal{E}$, with the following property: For all large fields $k$ with $k_0\subseteq k$ and $\td(k/k_0)\geqslant c_\mathcal{E}$, the induced differential embedding problem $((N_0)_k\rtimes (H_0)_k, S_0\otimes_{k_0(x)}k(x))$ over the differential field $(k(x), \frac{d}{dx})$ has a proper solution. 
		\item \label{main_b}
		If $k$ is a large field of infinite transcendence degree over $\Q$, then every split differential embedding problem over the differential field $(k(x), \frac{d}{dx})$ has a proper solution. 
	\end{enumerate}
\end{thm}
\begin{proof} Set $G_0=N_0\rtimes H_0$. We define $K=k_0((t))$ and endow $K(x)$ with the derivation $d/dx$. Then $\hat S=S_0\otimes_{k_0(x)}K(x)$ is a Picard-Vessiot ring over $K(x)$ with differential Galois group $(H_0)_{K}$ by Proposition~\ref{prop base change of PVR}. By \cite[Theorem~4.2]{BHH:embed}, the split embedding problem $((N_0)_{K}\rtimes (H_0)_{K},\hat S)$ has a proper solution, i.e., there exists a Picard-Vessiot ring $\hat R/K(x)$ with differential Galois group $(G_0)_{K}$ such that $\hat R^{(N_0)_{K(x)}}$ and $\hat S$ are isomorphic as differential $H_{K(x)}$-torsors.

Then by Proposition~\ref{prop descent EBP}, there exists a finitely generated field extension $k_1/k_0$ with $k_1\subseteq K=k_0((t))$ with the property that $\hat R$ descends to a Picard-Vessiot ring $R_1/k_1(x)$ with differential Galois group $(G_0)_{k_1}$ and such that $R_1^{(N_0)_{k_1(x)}}$ and $S_0\otimes_{k_0(x)}k_1(x)$ are isomorphic as differential $(H_0)_{k_1(x)}$-torsors. Set $c_\mathcal{E}:=\td(k_1/k_0)$.

Now suppose that $k$ is a large field with $k_0\subseteq k$ and $\td(k/k_0)\geqslant c_\mathcal{E}$. Set $N=(N_0)_k$, $H=(H_0)_k$, $G=(G_0)_k$ and $S=S_0\otimes_{k_0(x)}k(x)$. We claim that the embedding problem $(N\rtimes H, S)$ over $k(x)$ has a proper solution. By Corollary \ref{cor large}, there exists a $k_0$-embedding $k_1\hookrightarrow k$ and hence we can define $R=R_1\otimes_{k_1(x)}k(x)$. Then $R$ is a Picard-Vessiot ring over $k(x)$ with differential Galois group $((G_0)_{k_1})_k=(G_0)_k=G$ by Proposition~\ref{prop base change of PVR}. The isomorphism $R_1^{(N_0)_{k_1(x)}}\cong S_0\otimes_{k_0(x)}k_1(x)$ of differential $(H_0)_{k_1(x)}$-torsors gives rise to an isomorphism  $R^{N_{k(x)}}\cong S_0\otimes_{k_0(x)}k(x)$ of differential $H_{k(x)}$-torsors by base change from $k_1(x)$ to $k(x)$, where the equality $R_1^{(N_0)_{k_1(x)}}\otimes_{k_1(x)}k(x)=R^{N_{k(x)}}$ follows from Lemma \ref{lemma coaction}.\ref{item2} and $H_{k(x)}$-equivariance follows from Lemma \ref{lemma coaction}.\ref{item1}. As $S_0\otimes_{k_0(x)}k(x)=S$, we obtain an isomorphism of $H_{k(x)}$-torsors $R^{N_{k(x)}}\cong S$. Hence $R$ solves the embedding problem $(N\rtimes H,S)$ over $k(x)$ which concludes the proof of (a). 

Assertion (b) follows from (a) as follows: 
Let ($N\rtimes H$,$S$) be a split differential embedding problem over $k(x)$, i.e., $G=N\rtimes H$ is a linear algebraic group over $k$ and $S/K(x)$ is a given Picard-Vessiot ring with differential Galois group $H$. 
We fix a finitely generated field extension $k_0/\Q$ with $k_0\subseteq k$ such that $G$ and its structure of a semidirect product descends to a linear algebraic group $G_0=N_0\rtimes H_0$ over $k_0$. By Proposition \ref{prop descent PVR}, we may in addition choose $k_0$ such that $S$ descends to a Picard-Vessiot ring $S_0$ over $k_0(x)$ with differential Galois group $H_0$, i.e., $S_0\otimes_{k_0(x)}k(x)\cong S$. We conclude the proof by applying part (a) of the theorem.
\end{proof}

\begin{ex} \label{SEP large ex}
In the notation of Example~\ref{SEP descent ex}, if $k$ is a large field of transcendence degree at least two over $\Q$, then we can embed $k_1 = \Q(t,u)$ into $k$.  The proper solution to $\mathcal{E}_{k_1}$ given in that example then induces a proper solution to the split differential embedding problem $\mathcal{E}_k$ over $k(x)$.
(Note that if we were to replace the group $B_2={\mathbb{G}}_{{\mathrm a},\Q} \rtimes {\mathbb{G}}_{{\mathrm m},\Q}$ in Example~\ref{SEP descent ex} with $B_2^n={\mathbb{G}}_{{\mathrm a},\Q}^n \rtimes {\mathbb{G}}_{{\mathrm m},\Q}^n$, along the lines of Example~\ref{inverse ex}\ref{inverse ex O2n},
then the analogous example would require a large field of transcendence degree at least $2n$.)
\end{ex}

In the case that the field $k$ is algebraically closed, the splitness condition in Theorem~\ref{main}\ref{main_b} can be dropped, and we get a solution to {\em all} differential embedding problems:

\begin{cor}\label{cor ebp}
Let $k$ be an algebraically closed field of infinite transcendence degree over~$\Q$.  Then 
every differential embedding problem defined over the differential field $(k(x), \frac{d}{dx})$ has a proper solution.
\end{cor}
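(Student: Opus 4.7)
The plan is to reduce an arbitrary (not necessarily split) differential embedding problem over $k(x)$ to a split one, and then apply Theorem~\ref{main}(b). This reduction is precisely what Proposition~3.6 of \cite{BHHW} supplies: it is designed for an algebraically closed field of infinite transcendence degree over $\Q$, as in the hypothesis of the corollary, and it converts a general embedding problem into an associated split embedding problem whose proper solutions give rise to proper solutions of the original.

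Concretely, starting from a differential embedding problem $(G \twoheadrightarrow H, S)$ over $k(x)$ --- with $S/k(x)$ a Picard-Vessiot ring with differential Galois group $H$ and $G$ a linear algebraic group over $k$ surjecting onto $H$ --- I would first invoke \cite[Prop.~3.6]{BHHW} to produce an auxiliary split differential embedding problem of the form $(N \rtimes H, S)$ over $k(x)$, together with the mechanism translating a proper solution of this split problem into a proper solution of the original. Note that the Picard-Vessiot ring $S$ with its differential Galois group $H$ is kept fixed throughout this reduction.

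Next, since every algebraically closed field is large, $k$ is large of infinite transcendence degree over $\Q$. Theorem~\ref{main}(b) then applies to the split embedding problem $(N\rtimes H, S)$ over the differential field $(k(x), d/dx)$ and yields a Picard-Vessiot ring $R/k(x)$ with differential Galois group $N \rtimes H$ that properly solves it; in particular $R^{N_{k(x)}}$ is isomorphic to $S$ as a differential $H_{k(x)}$-torsor. Passing back through the reduction of \cite[Prop.~3.6]{BHHW} then delivers a proper solution of the given (non-split) embedding problem.

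The main conceptual point, and essentially the only nontrivial step, is the applicability of \cite[Prop.~3.6]{BHHW}: one must verify that its hypotheses (in particular the algebraic closedness of $k$ and the infinite transcendence degree of $k/\Q$) are what we have, and that the reverse translation from a proper solution of $(N\rtimes H, S)$ to one of $(G\twoheadrightarrow H, S)$ is valid in our differential setting. Once that is in hand, the corollary is obtained simply by chaining \cite[Prop.~3.6]{BHHW} with Theorem~\ref{main}(b).
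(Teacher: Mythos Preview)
Your approach is correct and essentially the same as the paper's: invoke \cite[Proposition~3.6]{BHHW} to reduce to the split case, then apply Theorem~\ref{main}(b), noting that algebraically closed fields are large. One minor clarification: as the paper summarizes it, \cite[Proposition~3.6]{BHHW} only requires $k$ to be algebraically closed (and is phrased as the global implication ``all split problems solvable $\Rightarrow$ all problems solvable''); the infinite transcendence degree hypothesis enters only when invoking Theorem~\ref{main}.
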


\begin{proof}
According to \cite[Proposition~3.6]{BHHW}, if $F$ is a one-variable differential function field over an algebraically closed field of constants $k$, and if every split differential embedding problem over $F$ has a proper solution, then \textit{every} differential embedding problem over $F$ has a proper solution.  Using this, the corollary then follows immediately from Theorem~\ref{main}\ref{main_b}.
\end{proof}

\medskip

\noindent Author information:

\medskip

\noindent Annette Bachmayr (n\'{e}e Maier): Institut f\"ur Mathematik, Johannes Gutenberg Universit\"at Mainz, 55128 Mainz, Germany.\\ email: {\tt abachmay@uni-mainz.de}

\medskip

\noindent David Harbater: Department of Mathematics, University of Pennsylvania, Philadelphia, PA 19104-6395, USA.\\ email: {\tt harbater@math.upenn.edu}

\medskip

\noindent Julia Hartmann:  Department of Mathematics, University of Pennsylvania, Philadelphia, PA 19104-6395, USA.\\ email: {\tt hartmann@math.upenn.edu}

\medskip

\noindent Florian Pop: Department of Mathematics, University of Pennsylvania, Philadelphia, PA 19104-6395, USA.\\ email: {\tt pop@math.upenn.edu}

\end{document}